\documentclass[11pt,twoside]{article}
\usepackage{amssymb,epsfig}
\usepackage{mathrsfs}
\usepackage{amsthm}
\usepackage{amsmath}
\usepackage{graphicx}
\usepackage{multirow}
\usepackage{array}
\usepackage{tabularray}
\usepackage{enumerate}
\usepackage{paralist}
\usepackage{setspace}
\usepackage{xcolor}
\usepackage{mathtools}
\usepackage{subcaption}

\date{}
\makeatletter
\newcommand*\bigcdot{\mathpalette\bigcdot@{.5}}
\newcommand*\bigcdot@[2]{\mathbin{\vcenter{\hbox{\scalebox{#2}{$\m@th#1\bullet$}}}}}
\makeatother
\newtheorem{proposition}{Proposition}[section]
\newtheorem{theorem}[proposition]{Theorem}
\newtheorem{lemma}[proposition]{Lemma}
\newtheorem{corollary}[proposition]{Corollary}
 \newtheorem{remark}[proposition]{Remark}

\begin{document}

\title{Bistable Dynamics and Complete Classifications of A Seasonal Competition Model Between {\it Aedes aegypti} and {\it Aedes albopictus}
\thanks{
\textbf{Funding:} {Niu's research was supported by NSFC (grant 12671210) and the Natural Science Foundation of Shanghai (grant
24ZR1401900). Wang's research was supported by the Strategic Priority Research Program of Chinese Academy of Sciences (XDB0900100), NSFC (grant 12331006), and National Key R\&D Program of China (2024YFA1013603, 2024YFA1013600). Xie's research was supported by NSFC (grant 12671195) and the Natural Science Foundation of Fujian Province (grant 2026J001752).
Zheng's research was supported by NSFC (grant 12371484).}}
}
\setlength{\baselineskip}{18pt}
\author{Lei Niu
\thanks{School of Mathematics and Statistics, Donghua University, Shanghai
201620, China (lei.niu@dhu.edu.cn).}
, Yi Wang
\thanks{School of Mathematical Sciences, University of Science and Technology of China,
Hefei, Anhui 230026, China
  (wangyi@ustc.edu.cn).}
  , Xizhuang Xie
  \thanks{School of Mathematical Sciences, Huaqiao University,
Quanzhou, Fujian 362021, China
(xzx@hqu.edu.cn).},
 and Bo Zheng
 \thanks{Corresponding author. School of Mathematics and Information Science, Guangzhou University,
Guangzhou, Guangdong 510006, China (zhengbo611@outlook.com).}
\\[5pt]
}

\maketitle
\begin{abstract}
\begin{spacing}{1.2}
\vspace{3mm} {The bistable dynamics and a complete classification of global dynamics are established for a competition model between {\it Aedes aegypti} and {\it Aedes albopictus} subject to favorable –unfavorable seasonal shifts, originally from the problem proposed in Zheng et al. [SIAM J. Appl. Math. 85 (2025), 1189–1211]. More precisely, by employing Floquet theory, we prove that the unique positive periodic solution must be unstable. Consequently, bistable dynamics naturally emerge, which provides a rigorous theoretical validation of the numerical simulation results reported in the aforementioned work. Building on this, we further derive a full classification of the model's global dynamics. In addition, we carry out a comprehensive analysis of how seasonal switching mechanisms govern the competitive outcomes between these two mosquito species, thereby offering insights into their ecological coexistence and potential displacement.

}
\end{spacing}
	\par
	\textbf{Keywords}: {\it Aedes aegypti}, {\it Aedes albopictus}, Poincar\'{e} map, Bistable dynamics, Floquet multipliers
    \vskip 3mm
    \par
	\textbf{AMS Subject Classification (2020)}: 34C25, 37C65, 92D25
\end{abstract}

\section{Introduction}\label{sec1}
{\it Aedes aegypti} and {\it Aedes albopictus} rank among the world's most prolific mosquito vectors, having spread to more than a hundred countries \cite{lwande2020,Wilkerson}. Over the past century, both species have served as major vectors for numerous mosquito-borne diseases, most notably yellow fever (YFV), dengue (DENV), chikungunya (CHIKV), and Zika (ZIKV) viruses \cite{Bhatt2013,Girard2020,Naslund2021}. Climate change is altering their geographical ranges: as formerly unsuitable habitats become climatically favorable, the distributions of both mosquitoes are projected to shift substantially in the coming century \cite{Gabriel2023,Iwamura,Khormi}. These expanding ranges are expected to pose serious challenges to global public health \cite{Diagne,Mayer}.

{\it Aedes aegypti} is strongly anthropophilic and tends to be found predominantly in urban areas. In contrast, {\it Aedes albopictus} occupies a wider range of habitats, including peri-urban, vegetated, and rural zones, largely due to its broader host preferences and greater adaptability to varying climates. In recent decades, both species have gradually expanded from tropical and subtropical regions into temperate zones.

{\it Aedes albopictus} is particularly invasive and geographically widespread, often coexisting with or displacing {\it Aedes aegypti} depending on environmental conditions (see, for example, \cite{Parker2019,Ruan2026}). Furthermore, studies done by Nasci et al. \cite{Nasci1989} provide compelling evidence that
male {\it Aedes albopictus} can successfully mate with female {\it Aedes aegypti}, yet these crosses typically yield no viable offspring, creating a reproductive disadvantage for {\it Aedes aegypti},
whereas male {\it Aedes aegypti} rarely mate with female {\it Aedes albopictus}. Such reproductive interference significantly influences the population dynamics of both mosquito species and, consequently, indirectly alters the range and intensity of mosquito-borne disease transmission \cite{Yang}.


Additionally, the seasonal migration of species represents a significant and well-recognized biological phenomenon. This process, referred to as seasonal succession by Sommer et al. \cite{SUZLD1986} and Reynolds \cite{Reynolds1980}, or as seasonal alternation by Koch \cite{Koch1974}, illustrates how periodic environmental changes influence numerous ecosystem processes and, consequently, the dynamics of populations and communities. Extensive research has been dedicated to this subject; for further reading, we direct readers to \cite{SX2012, K2010, NWX2025, NWX2026, Tang2025, White2020, Xiao2026} and the references therein.

Driven by these seasonal shifts, {\it Aedes aegypti} and {\it Aedes albopictus} have evolved a diapause mechanism, that is a form of dormancy enabling their eggs to survive unfavorable conditions. This phenomenon has been widely documented in both mosquito species \cite{Barbara, Denlinger,Sylvia}. Specifically, it has been established that the persistence of {\it Aedes aegypti} in temperate regions relies on egg survival during the winter-spring transition, which ensures population continuity into the subsequent favorable season (see, for example, \cite{Girard2020,Gabriel2023,K2010}).


To capture the interactive dynamics between {\it Aedes aegypti} and {\it Aedes albopictus} under interspecific mating competition and diapause induced by seasonal shifts, Zheng et al. \cite{Zheng2025} recently formulated the following time-switched competition model for these two mosquito populations:
\begin{equation}\label{AA}
	\begin{cases}
		\cfrac{dx}{dt} = \cfrac{\beta_1 x^2(t)}{x(t) + \alpha y(t)} - \left[\mu_1 + \xi_1(x(t) + y(t))\right] x(t), \quad  t \in [nT, nT+\bar{T}], \\
		x((n+1)T) = \delta_1 x(nT+\bar{T}), \quad\quad\quad\quad\quad\quad \quad \ t\in [nT+\bar{T},(n+1)T]\\
		\cfrac{dy}{dt} = \beta_2 y(t) - \left[\mu_2 + \xi_2(x(t) + y(t))\right] y(t),\quad \quad \ t \in [nT, nT+\bar{T}], \\
		y((n+1)T) = \delta_2 y(nT+\bar{T}), \quad\quad\quad\quad\quad\quad \quad \ \ t\in [nT+\bar{T},(n+1)T],
	\end{cases}
\end{equation}
 where $x(t)$ and $y(t)$ respectively represent the population size of {\it Aedes aegypti} and {\it Aedes albopictus}, $T$ represents the total length of a seasonal cycle, $\bar{T}\in(0, T)$ denotes the proportion of the cycle during which growth and competition occur, and $n\in\mathbb{Z}_{+}:=\{0, 1, 2, \cdots\}$ indexes the season. The parameters $\beta_{1}$ and $\beta_{2}$ are the birth rates, $\mu_{1}$ and $\mu_{2}$ are the density-independent death rates, $\xi_{1}$ and $\xi_{2}$ are the density-dependent death rates, $\alpha$ measures the mating competitiveness of \textit{{\it Aedes albopictus}} males relative to \textit{{\it Aedes aegypti}} males, and $\delta_{1}$ and $\delta_{2}$ represent the overwintering survival capacities of \textit{{\it Aedes aegypti}} and \textit{{\it Aedes albopictus}}, respectively. From a biological standpoint, all parameters $\alpha$, $T$, $\beta_{i}$, $\mu_{i}$, $\xi_{i}$, $\delta_{i}$ $(i=1,2)$ are positive constants, with $\mu_{i}<\beta_{i}$ and $\delta_{i}\leq 1$.

Clearly, during the favorable season, i.e., for $t\in [nT,nT+\bar{T}]$, the dynamics of the two mosquito species are governed by the continuous growth and competition system
\begin{equation}\label{AA1}
	\begin{cases}
		\displaystyle \cfrac{dx}{dt} = \frac{\beta_1 x^2(t)}{x(t) + \alpha y(t)} - \bigl[\mu_1 + \xi_1\bigl(x(t) + y(t)\bigr)\bigr] x(t), \\[10pt]
		\displaystyle \cfrac{dy}{dt} = \beta_2 y(t) - \bigl[\mu_2 + \xi_2\bigl(x(t) + y(t)\bigr)\bigr] y(t).
	\end{cases}
\end{equation}
Conversely, during the unfavorable season, i.e., for $t\in [nT+\bar{T},(n+1)T]$, the populations enter a diapause state described by the discrete survival mapping
\begin{equation}\label{AA2}
	\begin{cases}
		x\bigl((n+1)T\bigr) = \delta_1 x\bigl(nT+\bar{T}\bigr), \\[4pt]
		y\bigl((n+1)T\bigr) = \delta_2 y\bigl(nT+\bar{T}\bigr).
	\end{cases}
\end{equation}

%

By analyzing the Poincaré map associated with the switched system \eqref{AA}, Zheng et al. \cite{Zheng2025} established the following competitive outcomes between \textit{Aedes aegypti} and \textit{Aedes albopictus}:{
\begin{enumerate}[\rm (i)]
	\item If $\delta_2 \leq \delta_2^*$ and $\delta_1 \leq \delta_1^*$, then both species go extinct.
	\item If $\delta_2 \leq \delta_2^*$ and $\delta_1 > \delta_1^*$, then \textit{Aedes aegypti} wins the competition.
	\item If $\delta_2 > \delta_2^*$ and $\delta^* \leq 0$, then \textit{Aedes albopictus} wins the competition.	
\end{enumerate}}
Here,
\begin{equation}
{\delta_i^* = e^{-(\beta_i-\mu_i)\bar{T}}, \quad i=1,2, \quad \text{and} \quad \delta^* = \frac{1}{\xi_1}\ln \frac{\delta_1}{\delta_1^*} - \frac{1}{\xi_2}\ln \frac{\delta_2}{\delta_2^*}.}
\end{equation}

While $\delta_2 > \delta_2^*$ and $\delta^* > 0$ (we call it as case (iv) in this paper, see Lemma 2.3), Zheng et al. \cite[Theorem 3.6]{Zheng2025} proved that there exists a unique positive $T$-periodic solution. However, as pointed out by {Zheng et al.:}``the instability of the positive periodic solution has not been rigorously proven, although our numerical simulation shows its instability. Numerically, when $\delta_2 > \delta_2^*$ and $\delta^* > 0$, model \eqref{AA} generates bistable dynamics in that one population drives the other to extinction (see \cite[p. 1205]{Zheng2025})''. We note here that bistable dynamics means that such a unique coexistence periodic solution is of saddle type, and that a separatrix passing through this saddle coexistence periodic solution divides the basins of attraction of the two single-population periodic solutions (see, e.g., Smith and Thieme \cite{Smith2001} and references therein).


In the present paper, we will tackle this problem on the instability of the unique positive periodic solution; and moreover, we will rigorously
establish the bistable dynamics and a complete classification of global dynamics for system \eqref{AA}. More precsiely, we perform a detailed estimation of the Floquet multipliers for the positive periodic solution of system \eqref{AA} and prove that the solution is always unstable (see Lemma \ref{saddle}). By appealing to the theory of monotone dynamical systems, we establish the bistable nature of the dynamics (see Theorem \ref{thm31}). We then provide a complete classification of the global dynamics for system \eqref{AA} (see Corollary \ref{cor}), where the distribution diagram of the global dynamics in the $\delta_1$-$\delta_2$ plane is illustrated in Figure \ref{fig:para-region}. To examine the influence of switching parameters on competitive outcomes, we further analyze the bifurcations of the global dynamics as the switching duration $\bar{T}$ varies over the interval $(0,T]$.

%
%

This paper is organized as follows. Section 2 introduces the notation and provides relevant definitions and preliminary results. In Section 3, we analyze the Floquet multipliers of the positive periodic solution of system (\ref{AA}) and establish the bistable dynamics. We then derive the complete classification of the global dynamics for this system,
as well as  the parameter regions of four distinct dynamic
scenarios. Section 4 discusses in detail the influence of the switching parameters on the global dynamics.
The paper ends with a conclusion in Section 5.

\section{Notations and Preliminaries}\label{sec2}
 Let $\mathbb{R}^2_+:=\{x\in \mathbb{R}^2:x_i\geq 0,  i=1,2 \}$.
Let $X\subset \mathbb{R}^2_+$ and $P:X \rightarrow X$
be the Poincar$\acute{\rm e}$ (period) map of system (\ref{AA}).
The orbit of a state $x$ for $P$ is $\gamma(x)=\{P^n(x)$, $n\in
\mathbb{Z}_+ \}$. A fixed point $x$ of $P$ is a point $x\in X$ such that
$P(x)=x$. A set $V\subset X$ is positively
invariant under $P$ if $PV\subset V$, and invariant if $PV=V$. If $P$ is a differentiable map, we write $DP(x)$ as the Jacobian matrix of
$P$ at the point $x$. For an $n\times n$ matrix $A$, we write $A\geq
0$ iff $A$ is a nonnegative matrix (i.e., all the entries are
nonnegative) and $A\gg 0$ iff $A$ is a positive matrix (i.e., all the
entries are positive).

For two vectors $x,y \in \mathbb{R}^2$, we write $x\leq y$
if $x_i\leq y_i$ $i=1,2$, and $x\ll y$ if $x_i<y_i$ $i=1,2$.
If $x\leq y$ but $x\neq y$, we write $x<y.$
For $x,y \in \mathbb{R}_+^2$, let $[x,y]:=\{z\in \mathbb{R}_+^2:x\leq z \leq y\}$
be the closed order interval, and
$[[x,y]]:=\{z\in \mathbb{R}_+^2:x\ll z \ll y\}$ the open order interval.

Let $K:=\{(x_1,x_2)\in \mathbb{R}^2:x_1\geq 0, x_2\leq0\}$. Then we write $x\leq_K y$ if $y-x\in K$;
$x<_K y$ if $y-x\in K\setminus \{(0,0)\}$; and $x\ll_K y$ if $y-x\in $ Int$K$.
Notations such as $x\geq_K (>_K, \gg_K)\ y$ have the natural meanings.
Let $C_1:=\{(x_1,0):x_1 \in R_+\}$ and $C_2:=\{(0,x_2):x_2\in R_+\}$.
For any $a, b \in \mathbb{R}_+^2$ and $a\ll_K b$, we define the
closed $K$-order interval
 $[a,b]_K:=\{x\in \mathbb{R}_+^2: a\leq_K x \leq_K b\}$ and the open $K$-order internal
$[[a,b]]_K:=\{x\in \mathbb{R}_+^2: a\ll_K x \ll_K b\}$.


For any initial value $(x_0, y_0) \in \mathbb{R}^2_+ \setminus \{(0, 0)\}$, system \eqref{AA1} determines the state $(x(\bar{T}), y(\bar{T}))$ at the end of the favorable season. Starting from this state, the population sizes at the end of the full cycle are given by $(x(T), y(T)) = (\delta_1 x(\bar{T}), \delta_2 y(\bar{T}))$ according to the discrete mapping in system \eqref{AA2}. By iterating this procedure, the solution of system \eqref{AA} is uniquely determined and well-defined for all $t \in [0, +\infty)$. Since system \eqref{AA} is $T$-periodic, the associated Poincaré map $\mathcal{P}$ is defined as
$$\mathcal{P}(x(0), y(0)) = (x(T), y(T)).$$

We first define a linear map $L$ by
$$L(x,y) = (\delta_1 x, \delta_2 y), \quad (x,y) \in \mathbb{R}_+^2.$$
Let $\{\Phi_t\}_{t \geq 0}$ denote the solution flow associated with the autonomous system \eqref{AA1}. The Poincar\'e map $\mathcal{P}$ can then be expressed as the composition
$$\mathcal{P}(x,y) = (L \circ \Phi_{\bar{T}})(x,y), \quad \forall (x,y) \in \mathbb{R}_+^2 \setminus \{(0,0)\},$$
or more compactly, $\mathcal{P} = L \circ \Phi_{\bar{T}}$. Subsequently, we shall investigate the long-term dynamics of the discrete-time system defined by the iterations $\{\mathcal{P}^n\}_{n \geq 0}$.

\begin{lemma}\label{lemma:bounedness}
The set $\mathcal{M} = ([0, M_1] \times [0, M_2]) \setminus \{(0,0)\}$, where $M_i = \frac{\beta_i - \mu_i}{\xi_i} + 1$ for $i=1,2$, is positively invariant under the Poincar\'e map $\mathcal{P}$.\end{lemma}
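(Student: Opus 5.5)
We prove that for $(x_0,y_0)\in\mathcal{M}$ the solution $(x(t),y(t))$ of \eqref{AA1} stays in $[0,M_1]\times[0,M_2]$ for $t\in[0,\bar T]$ and never reaches $(0,0)$. After that, $L$ contracts each coordinate since $\delta_i\le 1$, so $\mathcal{P}(x_0,y_0)\in\mathcal{M}$. The plan has three steps: (a) invariance of the nonnegative quadrant together with the avoidance of the origin, (b) the upper bounds via comparison with scalar logistic equations, and (c) the effect of $L$.

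For (a), we first make sense of \eqref{AA1} on the boundary. The term $\beta_1x^2/(x+\alpha y)$ extends continuously to $\mathbb{R}^2_+\setminus\{(0,0)\}$ with value $0$ at $x=0$. Writing the $x$-equation as $\dot x = x\,g(x,y)$ with
\[
g(x,y)=\frac{\beta_1 x}{x+\alpha y}-\mu_1-\xi_1(x+y),
\]
we see that $g$ is bounded on bounded sets away from the origin. Similarly $\dot y = y\,h(x,y)$ with $h(x,y)=\beta_2-\mu_2-\xi_2(x+y)$. Hence, along a solution,
\[
x(t)=x_0\exp\Bigl(\int_0^t g\Bigr),\qquad y(t)=y_0\exp\Bigl(\int_0^t h\Bigr),
\]
so each coordinate keeps its sign. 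In particular, a solution starting at $x_0>0$ or $y_0>0$ keeps that coordinate positive and never hits $(0,0)$.

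For (b), we use $\beta_1x/(x+\alpha y)\le\beta_1$ and $x+y\ge x$ to get $\dot x\le x\bigl(\beta_1-\mu_1-\xi_1x\bigr)$. Likewise $\dot y\le y\bigl(\beta_2-\mu_2-\xi_2 y\bigr)$. By scalar comparison, the solution of the logistic equation with carrying capacity $K_i=(\beta_i-\mu_i)/\xi_i$ and initial datum in $[0,M_i]$ stays at most $\max\{\text{initial datum},K_i\}\le M_i$. Alternatively, and more simply, at any time with $x=M_1>K_1$ we have $\dot x<0$, so $x$ cannot cross above $M_1$; the same holds for $y$. This gives $x(\bar T)\le M_1$ and $y(\bar T)\le M_2$. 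Together with (a), this also guarantees global existence on $[0,\bar T]$, since the solution stays in a compact set minus the origin. The one delicate point is the origin, and it is excluded by the exponential representation in (a).

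For (c), we have $0\le\delta_1x(\bar T)\le M_1$ and $0\le \delta_2 y(\bar T)\le M_2$. The image is nonzero because $\delta_i>0$ and $(x(\bar T),y(\bar T))\ne(0,0)$. Hence $\mathcal{P}(\mathcal{M})\subset\mathcal{M}$. The main obstacle is only the singularity of the vector field at the origin, which the positivity argument in (a) handles.
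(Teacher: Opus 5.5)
Your proof is correct and follows the same route as the paper: the vector field points strictly inward on $x=M_1$ and $y=M_2$ (your logistic comparison is an equivalent form of this), so $\Phi_{\bar T}(\mathcal{M})\subset\mathcal{M}$, and $L$ preserves $\mathcal{M}$ because $0<\delta_i\le 1$. Your step (a) uses the representation $x(t)=x_0\exp(\int_0^t g)$ to get nonnegativity and to show the orbit avoids the origin. This makes explicit a point the paper's proof leaves implicit; the paper only writes that exponential formula after the lemma.
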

\begin{proof}
For any $(x,y) \in \mathcal{M}$ such that $x = M_1$, we have$$\frac{dx}{dt} = M_1 \left( \frac{\beta_1 M_1}{M_1 + \alpha y} - \mu_1 - \xi_1 M_1 - \xi_1 y \right) \leq M_1 (\beta_1 - \mu_1 - \xi_1 M_1) = -M_1 \xi_1 < 0.$$Similarly, for any $(x,y) \in \mathcal{M}$ such that $y = M_2$, we have$$\frac{dy}{dt} = M_2 (\beta_2 - \mu_2 - \xi_2 x - \xi_2 M_2) \leq M_2 (\beta_2 - \mu_2 - \xi_2 M_2) = -M_2 \xi_2 < 0.$$These inequalities imply that the vector field points inward on the boundaries $x=M_1$ and $y=M_2$. Thus, $\Phi_t(\mathcal{M}) \subset \mathcal{M}$ for all $t > 0$. In particular, at $t = \bar{T}$, we have $\Phi_{\bar{T}}(\mathcal{M}) \subset \mathcal{M}$.

Since $L(x,y) = (\delta_1 x, \delta_2 y)$ with $\delta_i \leq 1$ for $i=1,2$, it follows that $L(\mathcal{M}) \subset \mathcal{M}$. Consequently,
$$\mathcal{P}(\mathcal{M}) = L \circ \Phi_{\bar{T}}(\mathcal{M}) \subset L(\mathcal{M}) \subset \mathcal{M}.$$
The proof is complete.
\end{proof}

Let $F(x,y) = (xf_1(x,y), yf_2(x,y))$ denote the right-hand side of system \eqref{AA1} with
$$f_1(x, y)=\frac{\beta_1 x}{x + \alpha y} - \mu_1 - \xi_1(x + y),\ f_2(x, y)=\beta_2-\mu_2 - \xi_2(x+y).$$
Integrating \eqref{AA1} yields
$$x(t) = x(0) \exp \left( \int_0^t f_1(x(s), y(s)) ds \right)$$
and
$$y(t) = y(0) \exp \left( \int_0^t f_2(x(s), y(s)) ds \right)$$
for any $(x(0), y(0)) \in \mathbb{R}_+^2 \setminus \{(0,0)\}$. {It follows from Lemma \ref{lemma:bounedness} that}
$$\Phi_t(x,y) = (x(t), y(t)) \to (0,0)$$ as $(x(0), y(0)) \to (0,0)$ for all $t \geq 0$. By defining $\Phi_t(0,0) = (0,0)$, we obtain a continuous extension of $\Phi_t$ onto $\mathbb{R}_+^2$. Consequently, the Poincar\'e map can be naturally extended by setting $\mathcal{P}(0,0) = (0,0)$.



\begin{lemma}
\label{monotone}
The map $\mathcal{P}$ has the following properties on $\mathbb{R}_+^2$.
\begin{enumerate}[\rm (i)]
    \item If $(x_1,y_1) \leq_K (x_2,y_2)$, then $\mathcal{P}(x_1,y_1) \leq_K \mathcal{P}(x_2,y_2)$. Moreover, if
    $(x_1,y_1) <_K (x_2,y_2)$ and $(x_2,y_2)\in ${\rm Int} $\mathbb{R}_+^2$, then $\mathcal{P}(x_1,y_1)\ll_K \mathcal{P}(x_2,y_2)$.
    \item For any $(x(0),y(0)) \in \mathbb{R}_+^2$, the sequence of points $\mathcal{P}^n(x(0),y(0))$ converges to a fixed point of $\mathcal{P}$ as $n \to \infty$.
\end{enumerate}
\end{lemma}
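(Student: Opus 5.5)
Part (i) reduces to showing that the flow $\Phi_t$ of \eqref{AA1} is monotone with respect to $\leq_K$, because the diagonal linear map $L$ with $\delta_1,\delta_2>0$ clearly preserves $\leq_K$, $<_K$ and $\ll_K$. I would check that \eqref{AA1} is a competitive system on $\mathbb{R}_+^2$, i.e.\ that its Jacobian has nonpositive off-diagonal entries. For $(x,y)$ with $x+\alpha y>0$ we have
\[
\frac{\partial (xf_1)}{\partial y} = -\frac{\alpha\beta_1 x^2}{(x+\alpha y)^2}-\xi_1 x\le 0,\qquad \frac{\partial (yf_2)}{\partial x}=-\xi_2 y\le 0 .
\]
Set $S=\mathrm{diag}(1,-1)$. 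The system in the coordinates $(x,-y)$ is then cooperative, and the Kamke condition for competitive systems says $\Phi_t$ preserves $\leq_K$ on $\mathbb{R}_+^2$. The axes $C_1$ and $C_2$ are invariant, and the origin is fixed by the continuous extension.

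For the strong part, let $(x_1,y_1)<_K(x_2,y_2)$ with $(x_2,y_2)\in\mathrm{Int}\,\mathbb{R}_+^2$. I would argue componentwise using the exponential representations $x(t)=x(0)\exp(\int_0^t f_1)$ and $y(t)=y(0)\exp(\int_0^t f_2)$. Write $u=x_2(t)-x_1(t)\ge0$ and $v=y_1(t)-y_2(t)\ge0$. Because $F$ is $C^1$ near the relevant solutions, these differences satisfy a linear system $u'=a(t)u+b(t)v$, $v'=c(t)u+d(t)v$ with $b,c\ge0$ after the sign change. The off-diagonal couplings are strictly positive along interior orbits: $-\partial_y(xf_1)>0$ when $x>0$, and $-\partial_x(yf_2)=\xi_2 y>0$ when $y>0$.

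Irreducibility then gives strict inequality for every $t>0$. If $u(0)>0$, then $u$ stays positive by the variation of constants formula; strict positivity of $c$ along the solution then forces $v(t)>0$ for $t>0$. The case $v(0)>0$ is symmetric.

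The main obstacle is the boundary behaviour. The point $(x_1,y_1)$ may lie on an axis, and the Jacobian of $xf_1$ is not smooth at the origin. In the mean-value linearization the relevant off-diagonal entries are averages of the partial derivatives along the segment between the two solutions. I would check that this segment passes through points where $x>0$ and $y>0$ at least on part of $[0,\bar T]$, which holds because $(x_2(t),y_2(t))$ stays interior. This makes the integrated coupling positive. Applying $L$ then gives $\mathcal{P}(x_1,y_1)\ll_K\mathcal{P}(x_2,y_2)$.

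For (ii), I would use the standard result for strongly monotone (type-K) planar maps: a $K$-monotone map on a two-dimensional domain whose orbits are bounded has every orbit converging to a fixed point. For planar maps the order is effectively total along orbits, so the iterates are eventually monotone in the $K$-order (Smith's theorem, or de Mottoni--Schiaffino / Hsu--Smith--Waltman for planar competitive maps). Boundedness follows from Lemma \ref{lemma:bounedness}, since every orbit eventually enters $\mathcal{M}$ by the same inward-pointing argument applied for large $x$ or $y$.

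Concretely, take $p$ and $\mathcal{P}(p)$. If they are $K$-comparable, say $p\le_K\mathcal{P}(p)$, then monotonicity gives a $K$-monotone bounded sequence, which converges, and the limit is fixed by continuity. If they are not comparable, then $p$ and $\mathcal{P}(p)$ are ordered in the standard order. I would then use the planar fact that for an orientation-preserving injective competitive map, $\mathcal{P}(p)\ll p$ (or $\gg$) propagates to a monotone sequence in the usual order. The extra ingredients are injectivity of $\mathcal{P}$, which holds because flows are invertible and $L$ is invertible, and positivity of $\det D\mathcal{P}$, which holds by Liouville's formula.

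Orbits on the axes reduce to scalar monotone maps and converge trivially. I would cite the planar competitive-map theorem rather than reprove it.
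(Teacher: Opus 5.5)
Your proof is correct. Note that the paper gives no argument of its own here: it only cites Hsu--Zhao [Lemma 2.2] and Niu--Wang--Xie [Lemma 3.1], and both are stated for Lotka--Volterra seasonal-succession models, whose vector fields are polynomial and hence smooth. Your proposal writes out the machinery behind those lemmas:
\begin{itemize}
\item Kamke comparison after the change $y\mapsto -y$;
\item for the strong part, averaged off-diagonal couplings bounded below by $\xi_1x_2(t)/2$ and $\xi_2y_2(t)/2$ along the interior solution;
\item for (ii), the planar competitive-map convergence theorem.
\end{itemize}
More importantly, you check that this machinery applies to the present model, whose term $\beta_1x^2/(x+\alpha y)$ is not $C^1$ at $O$. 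The mean-value linearization is legitimate for two reasons. Segments between nonzero points of the pointed cone $\mathbb{R}_+^2$ avoid $O$. And the partial derivatives of $x^2/(x+\alpha y)$ lie in $[0,1]$ and $[-\alpha,0]$, so they are bounded. The citation buys brevity, while your version actually justifies transferring the LV results to this model.

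One correction in (ii). Your opening claim, that a $K$-monotone planar map with bounded orbits has every orbit convergent, is false as stated. The map $(x,y)\mapsto(c-y,c-x)$ on $[0,c]^2$ preserves $\leq_K$, yet every point off the line $x+y=c$ has period two. Likewise, orbits are in general only eventually monotone in \emph{either} $\leq_K$ or the standard order, not necessarily in $\leq_K$.

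What makes the argument work is the extra ingredient you add afterwards: $\mathcal{P}(p)\le\mathcal{P}(q)$ in the standard order implies $p\le q$. For a period map this is cleanest to obtain directly, in the manner of de Mottoni--Schiaffino:
\begin{itemize}
\item on the range of $\mathcal{P}$ one has $\mathcal{P}^{-1}=\Phi_{-\bar T}\circ L^{-1}$;
\item the time-reversed system $\dot w=-F(w)$ is cooperative on $\mathbb{R}_+^2$;
\item so your mean-value comparison, run backward along the two trajectories (which stay in $\mathbb{R}_+^2$), gives the implication;
\item pairs involving $O$ are trivial by injectivity.
\end{itemize}
This route also sidesteps a technical point. $\mathcal{P}$ is not differentiable at $O$, which Smith's injective-plus-$\det D\mathcal{P}>0$ criterion would formally require.
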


\begin{proof}
See Hsu and Zhao \cite[Lemma 2.2]{SX2012} or Niu et al. \cite[Lemma 3.1]{NWX2021}.
\end{proof}

\begin{lemma}
\label{local dynamics}
For system \eqref{AA}, the following properties hold:{
\begin{enumerate}[\rm (i)]
  \item If $\delta_2 \leq \delta_2^*$ and $\delta_1 \leq \delta_1^*$, then the trivial fixed point $O \coloneqq (0, 0)$ is globally asymptotically stable in $\mathbb{R}_+^2$.
  \item If $\delta_2 \leq \delta_2^*$ and $\delta_1 > \delta_1^*$, then the boundary fixed point $R_1=(\hat{x},0)$ on the $x$-axis is globally asymptotically stable in $\mathbb{R}_+^2\setminus C_2$.
  \item If $\delta_2 > \delta_2^*$ and $\delta^* \leq 0$, then the boundary fixed point $R_2 = (0, \hat{y})$ on the $y$-axis is globally asymptotically stable in $\mathbb{R}_+^2\setminus C_1$.
  \item If $\delta_2 > \delta_2^*$ and $\delta^* > 0$, then $\mathcal{P}$ has a unique positive fixed point $E = (x^*, y^*)$.
\end{enumerate}}
\end{lemma}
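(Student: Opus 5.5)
This lemma collects results from Zheng et al.\ \cite{Zheng2025}. The plan is to recover them from three ingredients: the convergence statement in Lemma \ref{monotone}(ii), an analysis of the two axes, and a computation on a conserved-type quantity.

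\textbf{Axes.} First I restrict $\mathcal{P}$ to the invariant axes $C_1$ and $C_2$.

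On $C_2$ (so $x=0$), the $y$-equation is logistic. Its time-$\bar{T}$ map is explicit:
\[
y\mapsto \frac{y\,e^{r_2\bar{T}}}{1+\xi_2 y\,(e^{r_2\bar{T}}-1)/r_2},\qquad r_2=\beta_2-\mu_2 .
\]
Composing with multiplication by $\delta_2$ gives a Beverton--Holt map. Its slope at $0$ is $\delta_2 e^{r_2\bar{T}}=\delta_2/\delta_2^*$. Hence $0$ is globally attracting on $C_2$ when $\delta_2\le\delta_2^*$. When $\delta_2>\delta_2^*$, there is a unique positive fixed point $\hat{y}$, and it attracts $C_2\setminus\{0\}$.

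On $C_1$ (so $y=0$), the $x$-equation is $\dot{x}=x(\beta_1-\mu_1-\xi_1x)$. The same reasoning gives threshold $\delta_1^*$ and fixed point $\hat{x}$.

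\textbf{Interior.} Next I use the ratio structure of the model. Set $u=\ln x$ and $v=\ln y$. Then
\[
\frac{\dot{u}}{\xi_1}-\frac{\dot{v}}{\xi_2}
=\frac{\beta_1x/(x+\alpha y)-\mu_1}{\xi_1}-\frac{\beta_2-\mu_2}{\xi_2}
\le \frac{r_1}{\xi_1}-\frac{r_2}{\xi_2},\qquad r_1=\beta_1-\mu_1 .
\]
The density terms cancel exactly. Over one full period this yields
\[
\frac{1}{\xi_1}\ln\frac{x(T)}{x(0)}-\frac{1}{\xi_2}\ln\frac{y(T)}{y(0)}
=\delta^*-\frac{\beta_1}{\xi_1}\int_0^{\bar{T}}\frac{\alpha y}{x+\alpha y}\,dt .
\]
Here $\delta^*$ is exactly the quantity defined in the paper.

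Suppose $\delta^*\le0$ and $y>0$. Then $x^{1/\xi_1}y^{-1/\xi_2}$ strictly decreases along $\mathcal{P}$-iterates. This rules out interior fixed points. By Lemma \ref{monotone}(ii), the orbit converges to a boundary fixed point. Stability of $O$ and of the $C_1$ fixed point under transversal perturbation then forces convergence to $R_2$, proving (iii). For this step I use that $\delta_2>\delta_2^*$ makes $O$ and $R_1$ transversally repelling in the $y$-direction: the linearized $y$-multiplier at $R_1$ is $\delta_2 e^{(r_2-\xi_2\hat{x})\bar{T}}$, and the identity above applied near $R_1$ shows it exceeds $1$.

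In cases (i) and (ii), $\delta_2\le\delta_2^*$ and $\dot{y}\le r_2y$ give $y_{n+1}\le(\delta_2/\delta_2^*)y_n$. This is non-strict in the boundary case, where I would use $\xi_2(x+y)>0$ to get strict decay. So $y_n\to0$, and the orbit limit is a fixed point on $C_1$. That limit is $O$ in case (i). In case (ii) it is $R_1$, because $O$ repels in the $x$-direction when $x>0$ and $y\to0$. Global stability follows from this attraction together with the monotonicity in Lemma \ref{monotone}(i).

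\textbf{Case (iv).} Existence of a positive fixed point follows from the $K$-monotone theory. Both $R_1$ and $R_2$ exist and are transversally unstable, by the period identity with $\delta^*>0$. A standard result for competitive planar maps then gives a positive fixed point in $[[R_2,R_1]]_K$ (Hsu--Smith--Waltman type).

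I expect uniqueness to be the main obstacle. The approach is to parametrize the positive fixed points by the ratio $s=y/x$ and show the fixed-point equation has at most one root. I would try first to use the identity above at a fixed point, which gives
\[
\frac{\beta_1}{\xi_1}\int_0^{\bar{T}}\frac{\alpha y}{x+\alpha y}\,dt=\delta^* ,
\]
and to prove that the left side is strictly monotone along the curve of points whose $y$-component is fixed by one period, using $K$-order comparisons of trajectories. Otherwise, I would simply cite \cite[Theorem 3.6]{Zheng2025}.
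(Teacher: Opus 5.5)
The paper gives no argument of its own here; it simply quotes Theorems 3.6 and 3.7 of \cite{Zheng2025}. Your self-contained route is a genuinely different approach, and much of it is right: the axis analysis, case (i), and the period identity (the density terms do cancel). That identity does rule out positive fixed points when $\delta^*\le 0$.

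\textbf{The gap is in case (ii).} You exclude $O$ as a limit because ``$O$ repels in the $x$-direction when $x>0$ and $y\to 0$''. That requires $f_1\to r_1$. But $f_1(x,y)=\beta_1x/(x+\alpha y)-\mu_1-\xi_1(x+y)$ has no limit at $O$. It tends to $r_1$ only if $y/x\to 0$, and to $-\mu_1$ in the cusp $x\ll y$ (the mating-interference Allee effect). Nothing forces $y_n/x_n\to 0$.

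Concretely, put $w=x/y$ and $D=\{0<x\le \eta y,\ 0<y\le c\}$. During the season $y(t)\le c\,e^{r_2\bar T}$ and $\frac{d}{dt}\ln w=f_1-f_2\le -(\mu_1+r_2)+\varepsilon$, where $\varepsilon=\beta_1\eta/\alpha+|\xi_1-\xi_2|(1+\eta)c\,e^{r_2\bar T}$. Also $y_{n+1}\le(\delta_2/\delta_2^*)y_n\le y_n$. Hence
\[
w_{n+1}\le \frac{\delta_1}{\delta_2}\,e^{-(\mu_1+r_2-\varepsilon)\bar T}\,w_n .
\]
So whenever $\delta_1e^{-\mu_1\bar T}<\delta_2e^{r_2\bar T}$, the set $D$ is forward invariant for small $\eta,c$. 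Along such orbits $x_n=w_ny_n\to 0$, so they converge to $O$, not to $R_1$.

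This inequality is compatible with the hypotheses of (ii). For example, it holds on the whole line $\delta_2=\delta_2^*$, $\delta_1>\delta_1^*$, since $\delta_1e^{-\mu_1\bar T}<1$. So in that range no argument can give global attraction of $R_1$ in $\mathbb{R}_+^2\setminus C_2$: statement (ii) as quoted needs an extra hypothesis. Tracking $w_n$ does close your argument when $\delta_1e^{-\mu_1\bar T}>\delta_2e^{r_2\bar T}$. An interior orbit tending to $O$ would then have $w_n\to\infty$ geometrically. Hence $f_1\to r_1$ uniformly on the season and $x_{n+1}/x_n\to\delta_1/\delta_1^*>1$, a contradiction.

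\textbf{Two further errors, in (iii) and (iv).} The conclusions survive in both cases, but the stated reasons are wrong.

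In (iii), the $y$-multiplier at $R_1$ is not $\delta_2e^{(r_2-\xi_2\hat x)\bar T}$, because the orbit $\bar x(t)$ through $R_1$ is not constant on $[0,\bar T]$. Using $\ln\delta_1+\int_0^{\bar T}(r_1-\xi_1\bar x)\,dt=0$, the multiplier equals $e^{-\xi_2\delta^*}$. This is $1$, not $>1$, when $\delta^*=0$. Exclude $R_1$ instead with your own quantity $x^{1/\xi_1}y^{-1/\xi_2}$. It strictly decreases along interior orbits and blows up near $R_1$, so this works for every $\delta^*\le 0$.

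In (iv), the stability picture is reversed. $\delta^*>0$ makes $R_1$ transversally \emph{stable}, with multiplier $e^{-\xi_2\delta^*}<1$. $R_2$ is always transversally stable: its $x$-multiplier is $\delta_1e^{-\mu_1\bar T}(\delta_2^*/\delta_2)^{\xi_1/\xi_2}<1$. The positive fixed point in $[[R_2,R_1]]_K$ therefore comes from the two-stable-endpoints form of the Hsu--Smith--Waltman result, which is how the paper argues in the proof of Theorem~\ref{thm31}. For uniqueness you, like the paper, fall back on \cite[Theorem 3.6]{Zheng2025}.
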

\begin{proof}

{The conclusions follow from Theorem 3.7 and Theorem 3.6 in Zheng et al. \cite{Zheng2025} immediately.}
\end{proof}

\begin{remark}
\label{rk24}
{\rm For case (iv) in Lemma \ref{local dynamics}, Zheng et al. \cite[p. 1205]{Zheng2025} proposed the question for the instability of the positive fixed point $E$; and moreover, they further numerically illustrated the  phenomenon of bistable dynamics in that one population drives the other to extinction.}

\end{remark}




\section{Bistable Dynamics and Complete Classification}\label{sec3}

{As mentioned in the introduction and Remark \ref{rk24}}, the bistable dynamics of system \eqref{AA} means that the positive fixed point $E$ is unstable, with a separatrix passing through $E$ dividing the basins of attraction of the boundary fixed points $R_1$ and $R_2$. We present the main results in this paper.

\begin{theorem}[Instability and bistable dynamics]
\label{thm31}
For $\delta_2 > \delta_2^*$ and $\delta^* > 0$,
\begin{enumerate}[\rm (a)]
\item  the unique positive fixed point $E$ is always a saddle (and hence, unstable);
\item system \eqref{AA} exhibits the bistable dynamics:
there exists a continuous, invariant, and unordered \textup{(}with respect to $\leq_K$\textup{)} curve $\Gamma \subset \mathbb{R}_+^2$, which is through the positive fixed point $E$ that separates the basins of attraction of $R_1$ and $R_2$.
\end{enumerate}
\end{theorem}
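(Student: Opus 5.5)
Part (a) is the core; part (b) should then follow from standard results on planar competitive maps.

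\textbf{Part (a): variational equation and Jacobian.} Let $(x(t),y(t))$ be the solution of \eqref{AA1} with $(x(0),y(0))=E$. Then $D\mathcal{P}(E)=\mathrm{diag}(\delta_1,\delta_2)\,\Psi(\bar T)$, where $\Psi$ is the fundamental matrix of the variational system $\Psi' = DF(x(t),y(t))\Psi$, $\Psi(0)=I$.

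\textbf{Sign structure.} By Lemma~\ref{monotone}(i), $D\mathcal{P}(E)$ has the $K$-sign pattern: positive diagonal entries and negative off-diagonal entries. Hence its eigenvalues $\lambda_1>\lambda_2$ are real and distinct, and $\lambda_1>0$. It therefore suffices to show $\lambda_1>1$.

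\textbf{Determinant via Liouville.} Liouville's formula gives
$$\det D\mathcal{P}(E)=\delta_1\delta_2\exp\Bigl(\int_0^{\bar T}\mathrm{tr}\,DF\,dt\Bigr).$$
At points of \eqref{AA1} we have $\partial_x(xf_1)=f_1+x\partial_xf_1$ and $\partial_y(yf_2)=f_2-\xi_2 y$.

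\textbf{Periodicity identities.} Since $E$ is a fixed point,
$$\delta_1 e^{\int_0^{\bar T} f_1}=1,\qquad \delta_2e^{\int_0^{\bar T}f_2}=1.$$
Consequently
$$\det D\mathcal{P}(E)=\exp\Bigl(\int_0^{\bar T}\bigl(x\partial_xf_1-\xi_2y\bigr)dt\Bigr),$$
which is not immediately signed. I therefore plan a different route.

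\textbf{Test vector and Perron–Frobenius.} Conjugating by $\mathrm{diag}(1,-1)$ turns $D\mathcal{P}(E)$ into a positive matrix $A$. By the Collatz–Wielandt bound, it suffices to find a vector $v\gg0$ with $Av> v$ componentwise, i.e.\ $w=(v_1,-v_2)$ with $D\mathcal{P}(E)w\gg_K w$.

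\textbf{Choosing the test vector.} Take $w$ tangent to a suitable curve through $E$ and use the explicit structure $f_2=\beta_2-\mu_2-\xi_2 s$, with $s=x+y$. Along the flow, both $\ln x$ and $\ln y$ involve $\int s$. The fixed point conditions give
$$\int_0^{\bar T}s\,dt=\frac{1}{\xi_2}\ln\frac{\delta_2}{\delta_2^*},\qquad \int_0^{\bar T}\Bigl(\frac{\beta_1x}{x+\alpha y}-\xi_1 s\Bigr)dt=-\ln\delta_1+\mu_1\bar T.$$
Perturb $E$ along $w$, writing $(\delta x(t),\delta y(t))$ for the perturbation. Linearizing the logarithmic forms gives
$$\frac{\delta y(\bar T)}{y(\bar T)}=\frac{\delta y(0)}{y(0)}-\xi_2\int_0^{\bar T}\delta s\,dt,$$
$$\frac{\delta x(\bar T)}{x(\bar T)}=\frac{\delta x(0)}{x(0)}-\xi_1\int_0^{\bar T}\delta s\,dt+\beta_1\int_0^{\bar T}\delta\Bigl(\frac{x}{x+\alpha y}\Bigr)dt.$$

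\textbf{Main obstacle.} The key fact is that the mating term $x/(x+\alpha y)$ is increasing in $x$ and decreasing in $y$. This makes the relative perturbation of the ratio $x/y$ grow. Indeed, set $r=\ln x-\ln y$; then
$$\dot r=\frac{\beta_1x}{x+\alpha y}-(\mu_1-\mu_2)-(\beta_2)-(\xi_1-\xi_2)s,$$
and the mixed $s$ term spoils exact monotonicity. To handle it I will use the combination
$$u=\xi_2\ln x-\xi_1\ln y,$$
whose derivative $\xi_2\beta_1 x/(x+\alpha y)+\text{const}$ is independent of $s$ and strictly increasing in $u$-direction perturbations. Its linearization satisfies $\delta\dot u = g(t)\,\delta r$ with $g>0$. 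Combined with the $K$-order, perturbations with $\delta u>0$ lie in a $K$-cone-like sector and $\delta u(\bar T)>\delta u(0)$; the diapause map $L$ preserves $\delta u$. Turning this into a strict inequality $D\mathcal{P}(E)w\gg_K w$ for a concrete $w$ is the step I expect to be delicate. I would first try $w$ with $\delta s(0)=0$, i.e.\ $w=(1,-1)$ scaled, and track both components. Then $\lambda_1>1>\lambda_2\ge$ (possibly negative, but $|\lambda_2|<1$ should follow from the determinant being less than $\lambda_1$ times something); at minimum $\lambda_1>1$ gives instability. To get saddle type I will argue that $\lambda_2\in(0,1)$ by comparison with the strongly $K$-monotone structure, or from the fact that $R_1,R_2$ are both locally stable in case (iv), so $E$ cannot be a repeller by planar competitive-map index theory.

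\textbf{Part (b).} With $E$ the unique positive fixed point and $R_1,R_2$ asymptotically stable (since $\delta_1>\delta_1^*$, $\delta^*>0$ give invadability failures), $O$ unstable, and $\mathcal{P}$ strongly competitive (Lemma~\ref{monotone}(i)) and injective with $\det D\mathcal{P}>0$, I will apply the Smith–Thieme / Kulenović–Merino stable manifold theorem for planar competitive maps. This yields a continuous, invariant, $\le_K$-unordered curve $\Gamma=W^s(E)$ through $E$ separating $\mathbb{R}^2_+$ into the basins of $R_1$ and $R_2$. Convergence of every orbit to a fixed point follows from Lemma~\ref{monotone}(ii).
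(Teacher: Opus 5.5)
\textbf{There is a genuine gap in part (a): you never establish the contracting eigenvalue.}

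Your reduction ``it therefore suffices to show $\lambda_1>1$'' does not prove that $E$ is a saddle. The condition $\lambda_1>1$ is equally compatible with a repeller, or with a non-hyperbolic point whose second eigenvalue equals $1$. Your fallbacks do not close this:
\begin{itemize}
\item Your Liouville formula gives $\det D\mathcal{P}(E)>0$, so both eigenvalues are positive. But that determinant has no known relation to $1$ or to $\lambda_1$, so it gives no bound $\lambda_2<1$.
\item Using stability of $R_1,R_2$ to rule out a repeller excludes at most $\lambda_2>1$.
\item Index theory only gives index $-1$, which a degenerate point with eigenvalues $1$ and $\lambda_1>1$ can also have.
\end{itemize}
The Kulenovi\'c--Merino-type stable-manifold theorems you invoke for part (b) require an eigenvalue of modulus strictly less than $1$, so part (b) inherits the gap.

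The missing idea is the adjoint system. In log coordinates $w=(\delta x/\bar x,\delta y/\bar y)$ the variational equation becomes $w'=B(t)w$, and $D\mathcal{P}(E)$ is similar to its monodromy $W(\bar T)$. The adjoint $z'=-B(t)^{\tau}z$ is cooperative and irreducible, so $Z(\bar T)\gg 0$, and $D\mathcal{P}(E)^{-1}$ is similar to $Z(\bar T)^{\tau}$. The mating term $x/(x+\alpha y)$ is homogeneous of degree zero, so an equal relative perturbation of both species only increases crowding: $B(t)\mathbf{1}=-(\bar x+\bar y)(\xi_1,\xi_2)^{\tau}$. Hence $\frac{d}{dt}(z_1+z_2)=(\bar x+\bar y)(\xi_1z_1+\xi_2z_2)$. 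Evaluate this along $z(t)=Z(t)\hat\beta$, where $\hat\beta\gg 0$ is the Perron eigenvector of $Z(\bar T)$, so that $z(t)\gg0$. Integrating gives $(r(Z(\bar T))-1)(\hat\beta_1+\hat\beta_2)>0$, hence $\lambda_{\min}=1/r(Z(\bar T))\in(0,1)$.

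\textbf{The expanding eigenvalue is also left unfinished, though you are very close.} Your quantity $\delta u=\xi_2\,\delta\ln x-\xi_1\,\delta\ln y$, with $\delta\dot u=g(t)\,\delta r$ and $g>0$, is exactly the paper's identity $\xi_2 s_1'+\xi_1 s_2'=\xi_2\beta_1\alpha\bar x\bar y(\bar x+\alpha\bar y)^{-2}(s_1+s_2)$ with $s=(\delta\ln x,-\delta\ln y)$. You are also right that the diapause step is invisible in these coordinates. However, guessing a concrete test vector such as $(1,-1)$ is the wrong way to close it, since nothing guarantees $D\mathcal{P}(E)w\gg_K w$ for it.

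What you need is positivity along the whole trajectory. After conjugating by $\mathrm{diag}(1,-1)$, the log-coordinate monodromy $S(t)$ is positive for $t>0$, because the coefficient matrix is cooperative and irreducible. So for the Perron eigenvector $u\gg 0$ of $S(\bar T)$, the solution $S(t)u$ stays $\gg 0$, which gives $\delta r>0$ on $[0,\bar T]$. Integrating then yields $(\lambda_{\max}-1)(\xi_2u_1+\xi_1u_2)>0$. Equivalently, this is Collatz--Wielandt applied to the transpose with the positive functional $(\xi_2,\xi_1)$.

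Once both eigenvalue bounds are in hand, your stable-manifold route to (b) is a plausible alternative to the paper's construction. You would still need to verify its hypotheses, such as smoothness of $\mathcal{P}$ near the origin. The paper instead takes $\Gamma$ as the complement of the two basins and argues as follows:
\begin{itemize}
\item unboundedness follows from connectedness;
\item unorderedness follows from strong $K$-monotonicity together with the fact that a saddle cannot attract an open set;
\item continuity follows from Tak\'a\v{c}'s order-decomposition result.
\end{itemize}
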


Theorem \ref{thm31} provides a theoretical justification for the numerical simulations on bistable dynamics reported by Zheng et al. in \cite[p. 1205]{Zheng2025}. Precisely, when $\delta_2 > \delta_2^*$ and $\delta^* > 0$ (i.e., case (iv) in Lemma \ref{local dynamics}), system \eqref{AA} admits a separatrix curve $\Gamma$ that connects the origin $O$ with the positive fixed point $E$ and extends to infinity in $\mathbb{R}_+^2$, such that {\it Aedes albopictus} wins whenever the initial distribution lies above $\Gamma$, whereas {\it Aedes aegypti} wins whenever it lies below $\Gamma$. Based on this result, we further establish a complete classification of the global dynamics for the system in the following corollary.

\begin{corollary}[Complete classification]
\label{cor}
For system \eqref{AA}, the global dynamics can be completely classified as follows:
\begin{enumerate}
\item [{\rm I}] {\rm (}\textit{Bistability}{\rm).} If $\delta_2 > \delta_2^*$ and $\delta^* > 0$, then both $R_1$ and $R_2$ are locally asymptotically stable, and $\mathcal{P}$ admits a unique positive fixed point $E$ of saddle type. Furthermore, there exists a continuous, unbounded, invariant, and unordered \textup{(}with respect to $\leq_K$\textup{)} curve $\Gamma \subset \mathbb{R}_+^2$ that separates the basins of attraction of $R_1$ and $R_2$.
{  \item[{\rm II}]  {\rm (}\textit{Aedes albopictus wins}{\rm).} If $\delta_2 > \delta_2^*$ and $\delta^* \leq 0$, then $\mathcal{P}$ has no positive fixed point, and $R_2$ is globally asymptotically stable in $\mathbb{R}_+^2 \setminus C_1$.}
{  \item[{\rm III}] {\rm (}\textit{Aedes aegypti wins}{\rm).} If $\delta_2 \leq \delta_2^*$ and $\delta_1 > \delta_1^*$, then $\mathcal{P}$ has no positive fixed point, and $R_1$ is globally asymptotically stable in $\mathbb{R}_+^2 \setminus C_2$.}
   \item[{\rm IV}]  {\rm (}\textit{Global Extinction}{\rm).} If $\delta_2 \leq \delta_2^*$ and $\delta_1 \leq \delta_1^*$, then $\mathcal{P}$ admits no boundary or positive fixed points, and the origin $O$ is globally asymptotically stable in $\mathbb{R}_+^2$.
\end{enumerate}
\end{corollary}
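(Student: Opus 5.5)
The plan is to assemble Corollary \ref{cor} case by case from Lemma \ref{local dynamics} and Theorem \ref{thm31}. The four parameter regions are mutually exclusive and exhaustive: either $\delta_2\le\delta_2^*$, split by $\delta_1\le\delta_1^*$ or $\delta_1>\delta_1^*$, or $\delta_2>\delta_2^*$, split by $\delta^*\le0$ or $\delta^*>0$. Cases II, III and IV will follow from Lemma \ref{local dynamics}(iii), (ii) and (i), respectively. For these I only need to add the statements about fixed points, and I would obtain them by restricting $\mathcal{P}$ to the invariant axes $C_1$ and $C_2$.

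\textbf{Boundary fixed points.} On $C_1$ the map reduces to $x\mapsto\delta_1\phi_{\bar T}(x)$, where $\phi$ is the logistic flow $\dot x=x(\beta_1-\mu_1-\xi_1x)$. This map is an increasing Beverton--Holt type map, and its derivative at $0$ is $\delta_1e^{(\beta_1-\mu_1)\bar T}=\delta_1/\delta_1^*$. Hence there is a positive fixed point on $C_1$ if and only if $\delta_1>\delta_1^*$, and it is then unique. The same argument on $C_2$ gives a positive fixed point if and only if $\delta_2>\delta_2^*$.

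\textbf{Positive fixed points.} Any positive fixed point of $\mathcal{P}$ is a nonzero fixed point that is not attracted to $O$, $R_1$ or $R_2$. This contradicts the global stability statements in (i)--(iii), since those attract every point of $\mathrm{Int}\,\mathbb{R}^2_+$. So there are no positive fixed points in cases II--IV. In case IV this also gives that there are no boundary fixed points.

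\textbf{Case I.} Existence and uniqueness of $E$ come from Lemma \ref{local dynamics}(iv). The saddle property of $E$ and the separatrix $\Gamma$ come from Theorem \ref{thm31}. Beyond these, three things remain to check.

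First, that both $R_1$ and $R_2$ exist. $R_2$ exists since $\delta_2>\delta_2^*$. For $R_1$, the inequality $\delta^*>0$ gives $\frac{1}{\xi_1}\ln\frac{\delta_1}{\delta_1^*}>\frac{1}{\xi_2}\ln\frac{\delta_2}{\delta_2^*}>0$, hence $\delta_1>\delta_1^*$.

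Second, that $R_1$ and $R_2$ are locally asymptotically stable. Since the basins of $R_1$ and $R_2$ are the two components of the complement of $\Gamma$, they are open neighbourhoods of $R_1$ and $R_2$ relative to $\mathbb{R}^2_+$. I plan to verify stability directly at $R_2=(0,\hat y)$ through $D\mathcal{P}(R_2)$. The matrix is triangular because $C_2$ is invariant. Its $y$-eigenvalue is the derivative of the Beverton--Holt map at its fixed point, which lies in $(0,1)$. Its $x$-eigenvalue is $\delta_1\exp\int_0^{\bar T}f_1(0,\hat y(s))\,ds$. On $C_2$ we have $f_1(0,y)=-\mu_1-\xi_1 y$, so this eigenvalue is less than $\delta_1e^{-\mu_1\bar T}\le 1$. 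At $R_1$ the $x$-eigenvalue is again in $(0,1)$. The $y$-eigenvalue is $\delta_2\exp\int_0^{\bar T}(\beta_2-\mu_2-\xi_2\hat x(s))\,ds$, and it should be $<1$ exactly when $\delta^*>0$. To show this I would use the identity $\int_0^{\bar T}\xi_1\hat x\,ds=\ln(\delta_1/\delta_1^*)$. This identity comes from periodicity, $\ln\delta_1+\int_0^{\bar T}(\beta_1-\mu_1-\xi_1\hat x)\,ds=0$. Substituting it into the $y$-eigenvalue gives $\ln(\delta_2/\delta_2^*)-\frac{\xi_2}{\xi_1}\ln(\delta_1/\delta_1^*)<0$. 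Alternatively one can cite the local analysis in Zheng et al.

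Third, that $\Gamma$ is unbounded. This follows from the monotonicity in Lemma \ref{monotone}, since an unordered invariant curve through $O$ and $E$ extends in both $K$-unordered directions. Otherwise the proof of Theorem \ref{thm31} can be cited.

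The main obstacle is this eigenvalue sign computation at $R_1$, which ties $\delta^*>0$ to linear stability. The periodicity identity above is the key step. Everything else is bookkeeping from the earlier results.
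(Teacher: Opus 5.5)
Your proposal is correct and follows the paper's route. Cases II--IV are read off from Lemma~\ref{local dynamics}(i)--(iii), and case I from Lemma~\ref{local dynamics}(iv) together with Theorem~\ref{thm31}. Your axis (Beverton--Holt) analysis and multiplier computations make explicit what the paper only cites: the $x$-multiplier at $R_2$ is below $\delta_1e^{-\mu_1\bar T}$, and the transverse multiplier at $R_1$ equals $e^{-\xi_2\delta^*}$. This justifies the local stability of $R_1,R_2$, which the paper's proof of Theorem~\ref{thm31} attributes to items of Lemma~\ref{local dynamics} belonging to other parameter regimes.

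The one step that does not stand on its own is your monotonicity heuristic for the unboundedness of $\Gamma$, since being unordered and invariant does not prevent a curve from being bounded. Rely instead on the connectedness argument in the proof of Theorem~\ref{thm31}. There the basins are relatively open and contain the unbounded rays $C_1\setminus\{O\}$ and $C_2\setminus\{O\}$, so a bounded $\Gamma$ would disconnect $\mathbb{R}_+^2$ minus a large closed ball centred at $O$.
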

\begin{proof}
{It is a direct combination of our main result Theorem \ref{thm31} and Lemma \ref{local dynamics}(i)-(iii).}
 \end{proof}

In order to prove Theorem \ref{thm31}, we first investigate the Floquet multipliers and local stability of the positive fixed point of $\mathcal{P}$, as established in the following lemma.

\begin{lemma}[Instability of the positive periodic solution]
\label{saddle}
Let $E = (x^*, y^*)$ be a positive fixed point of $\mathcal{P}$, and let $\lambda_1, \lambda_2$ denote the eigenvalues of the Jacobian matrix $D\mathcal{P}(E)$. Then $0 < \lambda_1 < 1 < \lambda_2$, which implies that $E$ is a hyperbolic saddle.
\end{lemma}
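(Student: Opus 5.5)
The plan is to write $D\mathcal{P}(E)$ through the variational equation and then combine the strong $K$-monotonicity of Lemma \ref{monotone} with a topological argument inside a $K$-order interval. Let $(x(t),y(t))=\Phi_t(E)$ for $t\in[0,\bar T]$, and let $X(t)$ solve
$$X'=DF(x(t),y(t))X,\qquad X(0)=I.$$
Then
$$D\mathcal{P}(E)=\mathrm{diag}(\delta_1,\delta_2)\,X(\bar T).$$
Along the orbit, $\partial_y(xf_1)<0$ and $\partial_x(yf_2)<0$. So the variational system is strictly competitive, and $X(\bar T)$ maps $K\setminus\{0\}$ into $\mathrm{Int}\,K$. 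Hence $D\mathcal{P}(E)=\begin{pmatrix} a & -b\\ -c & d\end{pmatrix}$ with $a,b,c,d>0$.

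For this sign pattern the discriminant is $(a-d)^2+4bc>0$, so the eigenvalues are real and distinct, $\lambda_1<\lambda_2$. By Perron--Frobenius relative to $K$, $\lambda_2$ has an eigenvector $v\gg_K 0$. By Liouville's formula,
$$\det D\mathcal{P}(E)=\delta_1\delta_2\exp\!\int_0^{\bar T}\mathrm{tr}\,DF>0,$$
and $\mathrm{tr}\,D\mathcal{P}(E)=a+d>0$. Together these give $0<\lambda_1<\lambda_2$. The fixed-point identities $\delta_i\exp\int_0^{\bar T} f_i=1$ simplify the determinant to
$$\det D\mathcal{P}(E)=\exp\int_0^{\bar T}\bigl(xf_{1,x}+yf_{2,y}\bigr)\,dt.$$
I will keep this formula in reserve for the estimates below.

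Next I will prove $\lambda_2>1$. The first step is to check that $R_2\ll_K E\ll_K R_1$ and that $R_1,R_2$ are locally asymptotically stable in case (iv). At $R_2$, the transversal multiplier is $\delta_1\exp\int(-\mu_1-\xi_1\hat y)<1$. At $R_1$, the transversal multiplier is $\delta_2\exp\int f_2(\hat x(t),0)\,dt$, and rewriting it with the explicit logistic solution shows it is $<1$ exactly when $\delta^*>0$.

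Now suppose $\lambda_2<1$, so that $E$ is asymptotically stable. Consider the order interval $J=[R_2,E]_K$. It is positively invariant by Lemma \ref{monotone}(i). Every orbit in $J$ converges to a fixed point by Lemma \ref{monotone}(ii), and the only fixed points in $J$ are $R_2$ and $E$ (uniqueness from Lemma \ref{local dynamics}(iv)). Hence $J$ is the disjoint union of the two basins. Both basins are relatively open and nonempty, while $J$ is connected, which is a contradiction.

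The case $\lambda_2=1$ is the main obstacle. Here I would exploit the strong monotonicity along the ray $E-sv$, $s>0$ small. Points on this ray are $\ll_K E$, so their orbits are monotone in $\leq_K$ and must converge to $E$ or to $R_2$. I would then try to show that the set of points of $J$ attracted to $E$ is still open in $J$, using the center-manifold direction $v$ together with the $\leq_K$ ordering. If that fails, I would instead rule out $\lambda_2=1$ through $\det(I-D\mathcal{P}(E))\neq 0$. The idea is that $\det(I-D\mathcal{P}(E))=0$ would make $E$ a degenerate zero of $\mathcal{P}-I$. The Zheng et al. uniqueness argument reduces fixed points to simple zeros of a scalar monotone function (the $\delta^*$ condition), and nondegeneracy should transfer.

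Finally, $\lambda_1<1$. The eigenvector for $\lambda_1$ lies in the interior of the positive quadrant, since it is unordered with respect to $K$. I would compare $\mathcal{P}$ along this direction with the scalar dynamics of total density. Concretely, I would use the determinant formula above and try to show $\int_0^{\bar T}(xf_{1,x}+yf_{2,y})<\ln\lambda_2$. A more robust route is to consider $D\mathcal{P}(E)$ acting on $\mathbb{R}^2_+$-type vectors. Increasing both components raises $x+y$, and the self-limitation terms $-\xi_1x,-\xi_2y$ in $f_{1,x},f_{2,y}$ then force a contraction of the form $D\mathcal{P}(E)w\leq w$ componentwise for some $w\gg0$, with strict inequality in at least one component. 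By a Collatz--Wielandt-type bound for the unordered direction, this gives $\lambda_1<1$. Combined with the previous step, $0<\lambda_1<1<\lambda_2$.
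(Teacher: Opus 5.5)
You correctly establish the sign pattern of $D\mathcal{P}(E)$, real distinct eigenvalues, and $\det D\mathcal{P}(E)>0$ by Liouville, so $0<\lambda_1<\lambda_2$. The transversal multipliers at $R_1,R_2$ are also right, and the connectedness argument on $J=[R_2,E]_K$ does rule out $\lambda_2<1$. But the lemma is a hyperbolicity statement, and the two strict inequalities are exactly what your plan leaves unproved.

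\emph{The case $\lambda_2=1$.} Order and basin arguments cannot exclude it. Suppose $\lambda_2=1$ and the dynamics on the centre manifold along $v$ is repelling at higher order (e.g.\ $s\mapsto s+cs^3$, $c>0$). Then $E$ attracts no point of $J\setminus\{E\}$, exactly as a hyperbolic saddle would, so connectedness of $J$ gives no contradiction. Your fallback via $\det(I-D\mathcal{P}(E))\neq 0$ is not an argument either. Uniqueness of $E$, or strict monotonicity of a scalar function whose zeros encode the fixed points, does not give a simple zero ($s^3$ is strictly monotone). You also give no mechanism for transferring nondegeneracy to $\mathcal{P}$.

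\emph{The inequality $\lambda_1<1$.} Your Collatz--Wielandt step is valid: pair $D\mathcal{P}(E)w\le w$ with the positive left eigenvector of $\lambda_1$. But the existence of such a $w\gg 0$ is only asserted. A competitive variational flow preserves $\leq_K$, not componentwise order, so ``self-limitation forces contraction'' proves nothing. The alternative bound $\int_0^{\bar T}(xf_{1,x}+yf_{2,y})\,dt<\ln\lambda_2$ needs quantitative information on $\lambda_2$ that you do not have.

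The missing idea is an exact Floquet identity. The paper passes to logarithmic variations $w=\mathrm{diag}(\bar x(t),\bar y(t))^{-1}v$. The fixed-point relations then absorb $\mathrm{diag}(\delta_1,\delta_2)$, so $D\mathcal{P}(E)$ is similar to the monodromy matrix of $w'=B(t)w$. Conjugating by $\mathrm{diag}(1,-1)$ gives a cooperative irreducible system $S'=A(t)S$. Along the Perron solution $s(t)=S(t)u\gg 0$ the density terms cancel exactly:
\[
\frac{d}{dt}\bigl(\xi_2 s_1+\xi_1 s_2\bigr)=\frac{\xi_2\alpha\beta_1\bar x\bar y}{(\bar x+\alpha\bar y)^2}\,(s_1+s_2)>0 .
\]
This is the linearization of $\frac{d}{dt}(\xi_2\ln x-\xi_1\ln y)=\xi_2 f_1-\xi_1 f_2$. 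Integrating over $[0,\bar T]$ gives $(\xi_2u_1+\xi_1u_2)(\lambda_2-1)>0$.

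For $\lambda_1$, the adjoint system $z'=-B(t)^\tau z$ is cooperative, with $\frac{d}{dt}(z_1+z_2)=(\xi_1z_1+\xi_2z_2)(\bar x+\bar y)>0$. This gives $1/\lambda_1=r(Z(\bar T))>1$. The same identity also supplies the vector you were looking for. It yields $Z(\bar T)^\tau\mathbf{1}\gg\mathbf{1}$, i.e.\ $D\mathcal{P}(E)^{-1}E\gg E$. Hence $w:=D\mathcal{P}(E)^{-1}E$ satisfies $D\mathcal{P}(E)w=E\ll w$.
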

\begin{proof}
 Let $F(x, y)$ denote the vector field on the right-hand side of system \eqref{AA1}, and let $(\bar{x}(t), \bar{y}(t)) = \Phi_t(E)$ be the solution flow starting at $E$. The variational equation is given by
 $$
 \frac{dV(t)}{dt} = DF(\bar{x}(t), \bar{y}(t)) V(t)
 $$
 with $V(0) = I$, where $V(t) = D_x\Phi_t(E)$.

Define the scaling matrix
$$
Q(t)= \begin{pmatrix} \bar{x}(t) & 0 \\ 0 & \bar{y}(t) \end{pmatrix}^{-1}.
$$
Under the transformation $w = Q(t)v$, the linear system
$$
\frac{dv}{dt} = DF(\bar{x}(t), \bar{y}(t)) v
$$
is transformed into
\begin{equation}\label{ww}
\frac{dw}{dt} = B(t)w
\end{equation}
where
$$
B(t) = \begin{pmatrix}
\cfrac{\alpha\beta_1  \bar{x}(t) \bar{y}(t)}{(\bar{x}(t) + \alpha \bar{y}(t))^2} - \xi_1 \bar{x}(t) & -\cfrac{\alpha \beta_1 \bar{x}(t) \bar{y}(t)}{(\bar{x}(t) + \alpha \bar{y}(t))^2} - \xi_1 \bar{y}(t) \\[3pt]
-\xi_2 \bar{x}(t) & -\xi_2 \bar{y}(t)
\end{pmatrix}.
$$
Let $W(t)$ be the monodromy matrix of system (\ref{ww}). That is, $W(t)$ satisfies
$$
\frac{dW(t)}{dt} = B(t) W(t), \quad W(0) = I.
$$
Thus, we have
$$
W(t) = Q(t) V(t) Q^{-1}(0),
$$
and hence,
$$
V(t) = Q^{-1}(t) W(t) Q(0), \quad \forall t \geq 0.
$$

At $t = \bar{T}$, using the condition $(\bar{x}(\bar{T}), \bar{y}(\bar{T})) = \left( \cfrac{x^*}{\delta_1}, \cfrac{y^*}{\delta_2} \right)$, we have
$$
D\mathcal{P}(E) = \begin{pmatrix} \delta_1 & 0 \\ 0 & \delta_2 \end{pmatrix} V(\bar{T}) = \begin{pmatrix} x^* & 0 \\ 0 & y^* \end{pmatrix} W(\bar{T}) \begin{pmatrix} x^* & 0 \\ 0 & y^* \end{pmatrix}^{-1},
$$
which implies that $D\mathcal{P}(E)$ is similar to $W(\bar{T})$.

Let
\begin{equation}\label{ss}
S(t)= \begin{pmatrix} 1 & 0 \\ 0 & -1 \end{pmatrix} W(t) \begin{pmatrix} 1 & 0 \\ 0 & -1 \end{pmatrix}^{-1}, \quad t \geq 0.
\end{equation}
Then $S(t)$ satisfies
$$
\frac{dS}{dt} = A(t) S, \quad S(0) = I
$$
with
$$
A(t) = (a_{ij}(t)) := \begin{pmatrix}
\cfrac{ \alpha\beta_1 \bar{x}(t) \bar{y}(t)}{(\bar{x}(t) + \alpha \bar{y}(t))^2} - \xi_1 \bar{x}(t) & \cfrac{\alpha\beta_1  \bar{x}(t) \bar{y}(t)}{(\bar{x}(t) + \alpha \bar{y}(t))^2} + \xi_1 \bar{y}(t) \\
\\
\xi_2 \bar{x}(t) & -\xi_2 \bar{y}(t)
\end{pmatrix}.
$$

Since the matrix $A(t)$ is cooperative and irreducible, it follows from Smith and Waltman \cite[Theorem B.3]{HP1995} that $S(t)\gg 0$ for each $t > 0$. Clearly, $W(\bar{T})$ is similar to $S(\bar{T})$, and hence
$$
r\left(D\mathcal{P}\left(E\right)\right) = r(W(\bar{T})) = r(S(\bar{T})).
$$
By the Perron–Frobenius theorem, $\lambda_2:= r(S(\bar{T}))$ is a simple eigenvalue of $S(\bar{T})$ with the maximum modulus and a positive eigenvector $u$.
Let $s(t)=(s_1(t),s_2(t))^\tau:=S(t)u$.
Then $$s(\bar{T})=S(\bar{T}) u = \lambda_2 u,$$
and $s_i(t) > 0$ for all $t \geq 0$, $i = 1, 2$. Since
$$
\begin{cases}
\cfrac{ds_1(t)}{dt} = \cfrac{\beta_1 \alpha \bar{x}(t) \bar{y}(t)}{(\bar{x}(t) + \alpha \bar{y}(t))^2}(s_1(t) + s_2(t)) - \xi_1 \bar{x}(t) s_1(t) + \xi_1 \bar{y}(t) s_2(t), \\
\cfrac{ds_2(t)}{dt} = \xi_2 \bar{x}(t) s_1(t) - \xi_2 \bar{y}(t) s_2(t),
\end{cases}
$$
it follows that
$$
\xi_2 \frac{ds_1(t)}{dt} + \xi_1 \frac{ds_2(t)}{dt} = \frac{\xi_2 \beta_1 \alpha \bar{x}(t) \bar{y}(t)}{(\bar{x}(t) + \alpha \bar{y}(t))^2}(s_1(t) + s_2(t)).
$$

By integrating the above equation for $t$ from $0$ to $\bar{T}$, we get
$$
(\xi_2 u_1 + \xi_1 u_2)(\lambda_2 - 1) = \int_0^{\bar{T}} \frac{\xi_2 \beta_1 \alpha \bar{x}(t) \bar{y}(t)}{(\bar{x}(t) + \alpha \bar{y}(t))^2}(s_1(t) + s_2(t)) dt > 0,
$$
which implies that $\lambda_2 > 1$.

Note that $V(t)$ satisfies the linear variational equation
\begin{equation}\label{vv}
\frac{dV(t)}{dt} = DF(\bar{x}(t), \bar{y}(t)) V(t), \quad V(0) = I.
\end{equation}
Consider the adjoint equation
\begin{equation}\label{yy}
\frac{dY(t)}{dt} = -DF(\bar{x}(t), \bar{y}(t))^{\tau} Y(t), \quad Y(0) = I,
\end{equation}
where $\tau$ denotes the transpose.
Then $Y(t) = (V(t)^\tau)^{-1}$. Since $-DF(\bar{x}(t), \bar{y}(t))^\tau$ is cooperative and irreducible (see Smith \cite[Chapter 4]{HL1995}, it follows from Smith
\cite[Theorem D(ii)]{Smith1986} that
$$
Y(t) = (V(t)^\tau)^{-1} \gg 0 \quad \text{for } t > 0.
$$
Therefore,
$$
(V(t))^{-1} \gg 0 \quad \text{for } t > 0.
$$


By a straightforward computation, it then follows that under a change of variable $z = Q(t)^{-1}v$, the linear equation
$$
\frac{dv}{dt} = -DF(\bar{x}(t), \bar{y}(t))^{\tau} v
$$
becomes
\begin{equation}\label{zz}
\frac{dz}{dt} = -B(t)^\tau z.
\end{equation}

Let $Z(t)$ be the monodromy matrix of this system, that is, $Z(t)$ satisfies
$$
\frac{dZ(t)}{dt} = -B(t)^\tau Z(t), \quad Z(0) = I.
$$
Thus, we have
$$
{Z(t) = Q(t)^{-1} Y(t) Q(0),\quad \forall t > 0.}
$$
Given that $Y(t) \gg 0$, $Z(t)$ is a positive matrix. Furthermore, the inverse of the Jacobian matrix $D\mathcal{P}(E)$ can be expressed as
$$D\mathcal{P}(E)^{-1} = V(\bar{T})^{-1} \begin{pmatrix} \delta_1 & 0 \\ 0 & \delta_2 \end{pmatrix}^{-1} = Q(0)^{-1} Z(\bar{T})^\tau Q(0).$$
Consequently, $D\mathcal{P}(E)^{-1}$ is similar to $Z(\bar{T})^\tau$, which implies that $D\mathcal{P}(E)^{-1}$ and $Z(\bar{T})$ share the same set of eigenvalues. By the Perron–Frobenius theorem, $\rho_2 := r(Z(\bar{T}))$ is a simple eigenvalue of $Z(\bar{T})$ with the maximum modulus and a positive eigenvector ${\hat{\beta}= (\hat{\beta}_1, \hat{\beta}_2)^\tau}$. Then $z(\bar{T})=Z(\bar{T}) {\hat{\beta}}= \rho_2 {\hat{\beta}}$, and $z_i(t) > 0$ for all $t \geq 0$, $i = 1, 2$. Since
$$
\begin{cases}
\cfrac{dz_1(t)}{dt} = \left(-\cfrac{\alpha {\hat{\beta}_1} \bar{x}(t) \bar{y}(t)}{(\bar{x}(t) + \alpha \bar{y}(t))^2} + \xi_1 \bar{x}(t)\right) z_1(t) + \xi_2 \bar{x}(t) z_2(t), \\
\cfrac{dz_2(t)}{dt} = \left(\cfrac{\alpha{\hat{\beta}_1}  \bar{x}(t) \bar{y}(t)}{(\bar{x}(t) + \alpha \bar{y}(t))^2} + \xi_1 \bar{y}(t)\right) z_1(t) + \xi_2 \bar{y}(t) z_2(t),
\end{cases}
$$
it follows that
$$
\frac{dz_1(t)}{dt} + \frac{dz_2(t)}{dt} = \left(\xi_1 z_1(t) + \xi_2 z_2(t)\right)(\bar{x}(t) + \bar{y}(t)).
$$

By integrating the above equation for $t$ from $0$ to $\bar{T}$, we find that this implies $\rho_2 > 1$. Therefore,
$$
\lambda_1 = \frac{1}{\rho_2} < 1
$$
is the smaller eigenvalue of $DP(E)$. Consequently, $0 < \lambda_1 < 1 < \lambda_2$, and $E$ is a hyperbolic saddle. We have proved the lemma.
\end{proof}


Now, we are ready to give the proof of Theorem \ref{thm31}.

\noindent \textit{\textbf{Proof of Theorem \ref{thm31}.}} Let $$U:=[R_2,R_1]_K=\{x\in \mathbb{R}_+^2: R_2\leq_K x \leq_K R_1\}.$$
Since $\delta_2>\delta_2^*$, it follows from {Lemma \ref{local dynamics}(ii)} that
$R_2$ is locally stable. By the expression of $\delta^*$, the conditions $\delta^*>0$ and $\delta_2>\delta_2^*$
implies that $\delta_1>\delta_1^*$. Due to { Lemma \ref{local dynamics}(iii)}, $R_1$ is also locally stable.
Using Proposition 2.1 in Hsu et al. \cite{SHP1996},
$\mathcal{P}$ has a positive fixed point $E$ in $U$, and $$R_2\ll_K E \ll_K R_1.$$
Moreover, Lemma \ref{saddle}
and {Lemma \ref{local dynamics}(iv)}
imply that $E$ is a unique saddle point.
\par Let $B_1$ and $B_2$ represent the basins of attraction of $R_1$ and $R_2$ for $\mathcal{P}$ in $\mathbb{R}_+^2$,
respectively. Then, $B_1$ and $B_2$ are relatively open in $\mathbb{R}_+^2$. Define $$\Gamma:=\mathbb{R}_+^2 \setminus (B_1\cup B_2).$$
Clearly, $O, E\in \Gamma$ and $\Gamma$ is \textit{invariant}. We first show that \textit{$\Gamma$ is unbounded}. If not, there exists a bounded open ball $B\subset \mathbb{R}^2$
with its closure $\bar{B}$ such that $\Gamma \subset \bar{B}$. Then, $$\mathbb{R}_+^2\setminus \bar{B}=(B_1\setminus \bar{B})\cup (B_2\setminus \bar{B}),$$ which contradicts the
connectedness of $\mathbb{R}_+^2\setminus \bar{B}$.
\par Next, we will prove that any two distinct points in $\Gamma$ are unordered related to $``\leq_K"$. If not,
there are two points $x,y \in \Gamma$ with $x<_K y$.
By using $\Gamma\setminus\{O\}\subset$ Int$\mathbb{R}_+^2$ and
Lemma \ref{monotone}(i), we have
$\mathcal{P}^nx\ll_K \mathcal{P}^ny$ for all $ n \geq 1$. Since $x,y \notin B_1\cup B_2$,
 it follows from
Lemma \ref{monotone}(ii) that
$$\lim\limits_{n\rightarrow\infty}
\mathcal{P}^n x=\lim\limits_{n\rightarrow\infty}
\mathcal{P}^n y=E.$$ This implies that $E$ attracts an open subset
$[[\mathcal{P}^{n_1}x,\mathcal{P}^{n_1} y]]_K$. By Lemma \ref{saddle},
$E$ is a saddle point, which can not attract the open set $[[\mathcal{P}^{n_1}x,\mathcal{P}^{n_1} y]]_K$.
This leads to a contradiction. Therefore, $\Gamma$ is unordered with repect to $``\leq_K"$.

Finally, we will show that $\Gamma$ is a continuous curve. Noticing that $\mathcal{P}$ is
strongly monotone (with respect to $``\leq_K"$) in Int$\mathbb{R}_+^2$.
Let $\tilde{\Gamma}=\Gamma \setminus \{O\}$. Then the pair $$\big((B_1\cup\Gamma)\cap {\rm Int}\mathbb{R}_+^2,
(B_2\cup\Gamma)\cap {\rm Int}\mathbb{R}_+^2\big)$$ is an order decomposition of Int$\mathbb{R}_+^2$
(see Tak$\rm\acute{a}\check{c}$ \cite[Definition 1.1]{T1992}) with $\tilde{\Gamma}$
being the boundary (also called \textit {d-hypersurface} of the order-decomposition).
Due to Tak$\rm\acute{a}\check{c}$ \cite[Proposition 1.3]{T1992}, it follows that $\tilde{\Gamma}$ is a one-dimensional
Lip-continuous graph. Precisely, we can write $\tilde{\Gamma}$ as a graph
$$\{(x_1,g(x_1))\in \mathbb{R}_+^2:x_1>0\},$$
where $g:(0,+\infty)\rightarrow\mathbb{R}_+$
is a strictly increasing Lip-continuous function. Let $$R=\lim\limits_{x_1\rightarrow 0^+}(x_1,g(x_1)).$$
Then, $R\in \partial \mathbb{R}_+^2\cap \Gamma$, which implies that $R=O$. Therefore,
$\Gamma=\tilde{\Gamma}\cup \{O\}$ and $\Gamma$ is a continuous curve. We have completed the proof.
\hfill$\square$

Before ending this section, we would like to discuss the dynamic scenarios and their distribution in the $\delta_1$-$\delta_2$ parameter plane.

\begin{remark}
   {\rm
Based on Corollary \ref{cor}, the parameter regions corresponding to the four distinct dynamic scenarios for system \eqref{AA} in the $\delta_1$-$\delta_2$ plane are explicitly given as follows:
\begin{itemize}
  \item[{I}] {\rm (Bistability): $\mathcal{D}_{\rm I} \coloneqq \left\{ (\delta_1, \delta_2) \in \mathbb{R}_+^2 : \delta_1^* < \delta_1 \leq 1, \ \delta_2^* < \delta_2 < h(\delta_1) \right\}$;}
{  \item[{II}] {\rm ({\it Aedes albopictus} wins): $\mathcal{D}_{\rm II} \coloneqq \left\{ (\delta_1, \delta_2) \in \mathbb{R}_+^2 : 0 < \delta_1 \leq h^{-1}(\delta_2), \ \delta_2^* < \delta_2 \leq 1 \right\}$;}}
 {   \item[{III}]   {\rm ({\it Aedes aegypti} wins): $\mathcal{D}_{\rm III} \coloneqq \left\{ (\delta_1, \delta_2) \in \mathbb{R}_+^2 : \delta_1^* < \delta_1 \leq 1, \ 0 < \delta_2 \leq \delta_2^* \right\}$;}}
  \item[{IV}] {\rm (Global Extinction): $\mathcal{D}_{\rm IV} \coloneqq \left\{ (\delta_1, \delta_2) \in \mathbb{R}_+^2 : 0 < \delta_1 \leq \delta_1^*, \ 0 < \delta_2 \leq \delta_2^* \right\}$.}
\end{itemize}
{\rm Here, $\delta_2 = h(\delta_1)$ denotes the separatrix curve of $\mathcal{D}_{\rm I}$ and $\mathcal{D}_{\rm II}$, with $h^{-1}(\delta_2)$ being its inverse function. The geometric configuration of these regions is illustrated in Figure \ref{fig:para-region}.}}
\end{remark}

\begin{figure}[htbp]
  \centering
  \begin{minipage}[b]{0.32\textwidth}
    \centering
    \includegraphics[width=\linewidth]{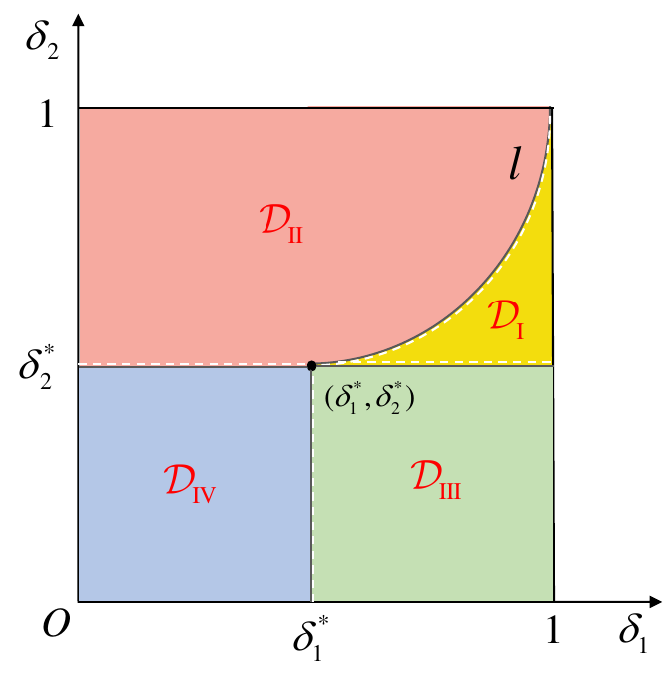}
    \subcaption{$\xi_2 > \xi_1$}
  \end{minipage}
  \hfill
  \begin{minipage}[b]{0.32\textwidth}
    \centering
    \includegraphics[width=\linewidth]{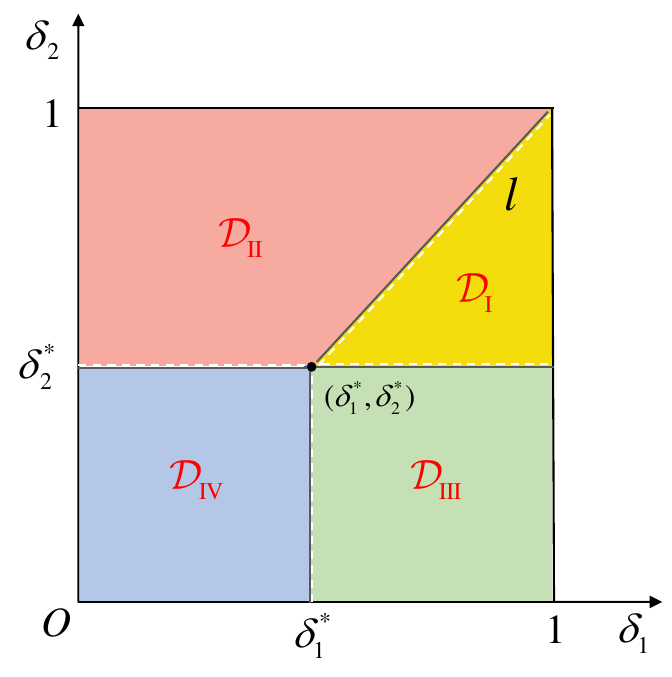}
    \subcaption{$\xi_2 = \xi_1$}
  \end{minipage}
  \hfill
  \begin{minipage}[b]{0.32\textwidth}
    \centering
    \includegraphics[width=\linewidth]{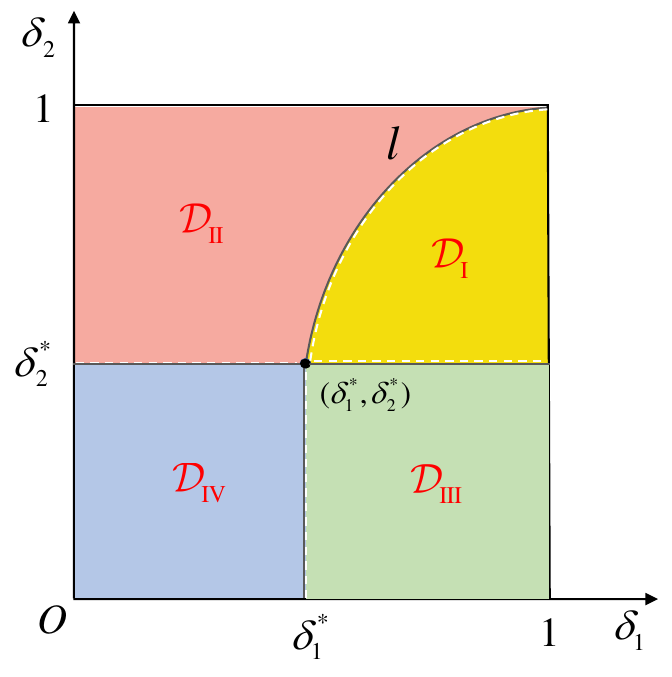}
    \subcaption{$\xi_2 < \xi_1$}
  \end{minipage}
  \caption{Distribution diagram of global dynamics for system \eqref{AA} in the $\delta_1$-$\delta_2$ plane under different relation between the density-dependent death rates $\xi_1$ and $\xi_2$.}
  \label{fig:para-region}
\end{figure}

To precisely determine the geometric shape of the boundary curve $l \coloneqq \{(\delta_1, h(\delta_1)) \in \mathbb{R}_+^2 : \delta_1^* \leq \delta_1 \leq 1\}$, we analyze the implicit function $\delta_2 = h(\delta_1)$ defined by the algebraic equation
\begin{equation}
  \frac{1}{\xi_1} \ln \frac{\delta_1}{\delta_1^*} - \frac{1}{\xi_2} \ln \frac{\delta_2}{\delta_2^*} = 0.
  \label{eq:separatrix}
\end{equation}
Direct differentiation yields
\[
  \frac{d\delta_2}{d\delta_1} = \frac{\xi_2 \delta_2}{\xi_1 \delta_1} > 0 \quad \text{and} \quad \frac{d^2\delta_2}{d\delta_1^2} = \frac{\xi_2 \delta_2}{(\xi_1 \delta_1)^2}(\xi_2 - \xi_1).
\]
Consequently, the curvature of the separatrix curve $l$ on the interval $[\delta_1^*, 1]$ depends strictly on the ratio of $\xi_1$ and $\xi_2$, where $\xi_1$ and $\xi_2$
represent the density-dependent death rates of {\it Aedes aegypti} and {\it Aedes albopictus}, respectively.

\begin{enumerate}[\rm (i)]
  \item If $\xi_2 > \xi_1$, the curve $\delta_2 = h(\delta_1)$ is strictly increasing and convex (see Figure \ref{fig:para-region}(a));
  \item If $\xi_2 = \xi_1$, the curve degenerates into a straight line segment with slope $\frac{\delta_2^*}{\delta_1^*}$ (see Figure \ref{fig:para-region}(b));
  \item If $\xi_2 < \xi_1$, the curve $\delta_2 = h(\delta_1)$ is strictly increasing and concave (see Figure \ref{fig:para-region}(c)).
\end{enumerate}

Furthermore, the separatrix curve passes through $(\delta_1^*, \delta_2^*)$, since $h(\delta_1^*) = \delta_2^*$. In particular, when $\frac{\beta_1-\mu_1}{\xi_1} = \frac{\beta_2-\mu_2}{\xi_2}$, it also passes through $(1, 1)$, i.e., $h(1) = 1$. If $\frac{\beta_1-\mu_1}{\xi_1} \neq \frac{\beta_2-\mu_2}{\xi_2}$, then $h(1) \neq 1$; nevertheless, the global bifurcation diagrams can be derived in an entirely analogous manner and are thus omitted here for brevity.

\begin{remark}
{\rm Biological insights and dynamical transitions across parameter boundaries depicted in Figure \ref{fig:para-region} can be summarized as follows:}
\begin{enumerate}[\rm (i)]
  \item {\rm{Varying $\delta_1$ (Control of species $x$):} {For a fixed $\delta_2 \leq \delta_2^*$, increasing $\delta_1$ forces the system to transition from global extinction ({Region $\mathcal{D}_{\rm IV}$}) to competitive exclusion of $y$ ({Region $\mathcal{D}_{\rm III}$}) as $(\delta_1, \delta_2)$ crosses the vertical line $\delta_1 = \delta_1^*$. Conversely, for $\delta_2 > \delta_2^*$, increasing $\delta_1$ drives the dynamics from competitive exclusion of $x$ ({Region $\mathcal{D}_{\rm II}$}) directly into bistability ({Region I}) upon crossing the separatrix $l$.}}
  \item {\rm{Varying $\delta_2$ (Control of species $y$):} {For a fixed $\delta_1 \leq \delta_1^*$, continuously increasing $\delta_2$ shifts the state from {Region $\mathcal{D}_{\rm IV}$} to {Region $\mathcal{D}_{\rm II}$} across $\delta_2 = \delta_2^*$. However, for $\delta_1 > \delta_1^*$, raising $\delta_2$ induces a rich succession of dynamical regimes: transitioning from {Region $\mathcal{D}_{\rm III}$} to {Region $\mathcal{D}_{\rm I}$}, and ultimately to {Region $\mathcal{D}_{\rm II}$} as $(\delta_1, \delta_2)$ successively crosses $\delta_2 = \delta_2^*$ and the separatrix $l$.}}
  \item {\rm Varying $\xi_2$: The shape and extent of {Region $\mathcal{D}_{\rm I}$}, together with the geometry of the separatrix $l$, depend on the relative
  magnitudes of $\xi_1$ and $\xi_2$. As $\xi_2$ decreases, {Region $\mathcal{D}_{\rm I}$} expands, and bistability can be realized over a broader range of parameters $(\delta_1,\delta_2)$. This implies that $\xi_2$ directly governs the parameter region of $(\delta_1,\delta_2)$ supporting bistability. The smaller
    $\xi_2$, the more readily bistable dynamics arise.}
\end{enumerate}
\end{remark}



\section{The effects of the switching parameter $\bar{T}$}\label{sec4}
{In this section, we fix all parameters except $\bar{T}\in (0,T]$, and further analyze the bifurcations of the global
dynamics of system (\ref{AA}) as $\bar{T}$ varies over $(0,T]$. For convenience, Corollary \ref{cor} can be summarized into the following four dynamic scenarios:
\begin{itemize}
\item[(i)] Dynamics-I: Bistability occurs for {\it Aedes aegypti} and {\it Aedes albopictus};
{\item[(ii)] Dynamics-II: {\it Aedes albopictus} wins the competition;}
{\item[(iii)] Dynamics-III: {\it Aedes aegypti} wins the competition;}
\item[(iv)] Dynamics-IV: {\it Aedes aegypti} and {\it Aedes albopictus} go to extinction.
\end{itemize}
Now, we discuss two cases $\bar{T}=T$ and $\bar{T}\in (0,T)$,
and further obtain the following two propositions.
\begin{proposition}
\label{prop1}
If $\bar{T}=T$, then system (\ref{AA}) admits only two dynamic scenarios, which are presented in Table \ref{table1}. This result also corresponds to Theorem 2.1 in \cite{Zheng2025}.
\end{proposition}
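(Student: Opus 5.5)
When $\bar{T}=T$ there is no unfavorable season, so the map $L$ acts at the instant $t=(n+1)T$ immediately after the growth phase. My plan is to substitute $\bar{T}=T$ into the thresholds
$$\delta_i^*=e^{-(\beta_i-\mu_i)T}, \qquad i=1,2,$$
and then ask which of the four parameter regions in Corollary \ref{cor} can be nonempty. All other parameters, and the $\delta_i\in(0,1]$, are fixed as in the model.

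The first step is to note that $\beta_i>\mu_i$ and $T>0$ give $\delta_i^*<1$. This alone does not exclude any region, since $\delta_i$ may still be small. So I expect the intended reading of Table \ref{table1} to rest on an extra structural restriction. The natural one is the case $\delta_1=\delta_2=1$ with $\bar{T}=T$, which recovers the purely autonomous system \eqref{AA1} of \cite[Theorem 2.1]{Zheng2025}.

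In that case $\delta_i=1>\delta_i^*$ for $i=1,2$, which rules out Dynamics-III and Dynamics-IV, since both require $\delta_2\le\delta_2^*$. Moreover,
$$\ln\frac{\delta_i}{\delta_i^*}=(\beta_i-\mu_i)T,$$
so that
$$\delta^*=T\left(\frac{\beta_1-\mu_1}{\xi_1}-\frac{\beta_2-\mu_2}{\xi_2}\right).$$
The sign of $\delta^*$ is therefore the sign of $K_1-K_2$, where $K_i:=(\beta_i-\mu_i)/\xi_i$ are the carrying capacities.

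Applying Corollary \ref{cor} then gives exactly two scenarios. If $K_1>K_2$, we are in Dynamics-I, bistability, with $E$ a saddle by Lemma \ref{saddle} and separatrix $\Gamma$ given by Theorem \ref{thm31}. If $K_1\le K_2$, we are in Dynamics-II, where \textit{Aedes albopictus} wins. These two cases would fill the rows of Table \ref{table1}. I would also check consistency with \cite[Theorem 2.1]{Zheng2025} for the ODE \eqref{AA1}: its interior equilibrium lies on $x+y=K_2$, and it exists precisely when $K_1>K_2$.

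The main obstacle is the interpretation step. If the $\delta_i$ are genuinely left free when $\bar{T}=T$, all four regions $\mathcal{D}_{\rm I}$–$\mathcal{D}_{\rm IV}$ remain nonempty, because $\delta_i^*\in(0,1)$. The claim of exactly two scenarios then needs the convention that $\bar{T}=T$ means no diapause loss, i.e.\ $\delta_1=\delta_2=1$. I would state this convention explicitly at the start of the proof and check it against the source model in \cite{Zheng2025}.
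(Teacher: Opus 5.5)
Your proposal is correct and is essentially the paper's proof. The paper likewise reads $\bar{T}=T$ as ``no unfavorable season,'' so that $\delta_1=\delta_2=1>\delta_i^*$, computes $\delta^*=\bigl(\frac{\beta_1-\mu_1}{\xi_1}-\frac{\beta_2-\mu_2}{\xi_2}\bigr)T$, and reads off Dynamics-I or Dynamics-II from Corollary~\ref{cor}(I),(II). The convention $\delta_1=\delta_2=1$ that you flag as the main obstacle is exactly the one the paper adopts explicitly at the start of its proof, and you are right that the ``only two scenarios'' claim depends on it.
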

\begin{table}[htbp]
\centering
\caption{Parameter conditions and dynamic scenarios for $\bar{T}=T$}
\label{table1}
\renewcommand{\arraystretch}{2.8}
\begin{tabular}{
|>{\centering\arraybackslash}m{4.2cm}|
>{\centering\arraybackslash}m{4.8cm}|
>{\centering\arraybackslash}m{4.0cm}|
}
\hline
Values of $\bar{T}$ & Parameter conditions & Dynamic scenarios \\
\hline
\multirow{2}{*}{$\bar{T}= T$}
& $\displaystyle \frac{\beta_1-\mu_1}{\xi_1} > \frac{\beta_2-\mu_2}{\xi_2}$
& Dynamics-I \\
\cline{2-3}
& $\displaystyle \frac{\beta_1-\mu_1}{\xi_1} \leq \frac{\beta_2-\mu_2}{\xi_2}$
& Dynamics-II \\
\hline
\end{tabular}
\end{table}
\begin{proof}
In this case, there is no unfavorable season, which implies that system (\ref{AA}) becomes the autonomous system (\ref{AA1}) and satisfies
$\delta_1=\delta_2=1$. Thus, $\delta_1>\delta_1^*$ and $\delta_2>\delta_2^*$. By the expression of $\delta^*$, we have
$$\delta^*=\Big(\frac{\beta_1-\mu_1}{\xi_1}-\frac{\beta_2-\mu_2}{\xi_2}\Big)\bar{T}.$$
Then the conclusions follow from Corollary \ref{cor}(I) and Corollary \ref{cor}(II) immediately.
\end{proof}

\begin{proposition}
\label{prop2}
If $\bar{T}\in (0,T)$, then system (\ref{AA}) has four dynamic scenarios, and the bifurcations of the global dynamics are fully demonstrated in the Table \ref{table2} and Figure \ref{bifurcation-1}-\ref{bifurcation-3}, where
$$\bar{T}_i:=\frac{\ln \delta_i}{\mu_i-\beta_i},\ i=1,2,$$
and
$$\bar{T}^*=\bigg(\frac{\beta_2-\mu_2}{\xi_2}\bar{T}_2-\frac{\beta_1-\mu_1}{\xi_1}\bar{T}_1\bigg)\bigg/\bigg(\frac{\beta_2-\mu_2}{\xi_2}-\frac{\beta_1-\mu_1}{\xi_1}\bigg).$$

\end{proposition}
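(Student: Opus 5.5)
The plan is to reduce everything to Corollary~\ref{cor} by rewriting its parameter conditions as explicit inequalities in $\bar{T}$, with all other parameters fixed. For $\bar{T}\in(0,T)$ the values $\delta_i\in(0,1]$ are fixed. Since $\delta_i^*=e^{-(\beta_i-\mu_i)\bar{T}}$ is strictly decreasing in $\bar{T}$, the condition $\delta_i>\delta_i^*$ holds exactly when $(\beta_i-\mu_i)\bar{T}>-\ln\delta_i$, that is, when $\bar{T}>\bar{T}_i=\ln\delta_i/(\mu_i-\beta_i)\ge 0$. Likewise $\delta_i\le\delta_i^*$ holds exactly when $\bar{T}\le\bar{T}_i$. (When $\delta_i=1$ we have $\bar{T}_i=0$, so $\delta_i>\delta_i^*$ holds throughout $(0,T)$.)

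Next I rewrite $\delta^*$ as an affine function of $\bar{T}$. Set $K_i:=(\beta_i-\mu_i)/\xi_i$. Since $\ln(\delta_i/\delta_i^*)=\ln\delta_i+(\beta_i-\mu_i)\bar{T}=(\beta_i-\mu_i)(\bar{T}-\bar{T}_i)$, we get
$$\delta^*(\bar{T})=K_1(\bar{T}-\bar{T}_1)-K_2(\bar{T}-\bar{T}_2)=(K_1-K_2)\bar{T}-(K_1\bar{T}_1-K_2\bar{T}_2).$$
If $K_1\neq K_2$, its unique zero is $\bar{T}^*=(K_2\bar{T}_2-K_1\bar{T}_1)/(K_2-K_1)$, which is the formula in the statement. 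The sign of $\delta^*$ on either side of $\bar{T}^*$ is determined by the sign of $K_1-K_2$. If $K_1=K_2$, then $\delta^*\equiv K_1(\bar{T}_2-\bar{T}_1)$ is constant, and its sign is that of $\bar{T}_2-\bar{T}_1$.

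The case split follows from this. First I split on the sign of $K_1-K_2$, which I expect to match the three figures. Then I split on the relative order of $\bar{T}_1$, $\bar{T}_2$ and $T$. A useful identity is $\delta^*(\bar{T}_2)=K_1(\bar{T}_2-\bar{T}_1)$, so at $\bar{T}=\bar{T}_2$ the sign of $\delta^*$ is that of $\bar{T}_2-\bar{T}_1$; this places $\bar{T}^*$ relative to $\bar{T}_2$. On $(0,\min\{\bar{T}_1,\bar{T}_2\}]$ we are in Dynamics-IV. For $\bar{T}$ beyond $\bar{T}_1$ but not beyond $\bar{T}_2$ we are in Dynamics-III. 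For $\bar{T}>\bar{T}_2$ we are in Dynamics-I or Dynamics-II, according to the sign of $\delta^*$, equivalently the position of $\bar{T}$ relative to $\bar{T}^*$. If $\bar{T}\le\bar{T}_2$ and $\bar{T}\le\bar{T}_1$ we fall in Dynamics-IV, and the region with $\bar{T}_2<\bar{T}\le\bar{T}_1$ is covered by case II or case I. The last possibility is excluded because $\delta^*>0$ and $\delta_2>\delta_2^*$ force $\delta_1>\delta_1^*$. Each resulting interval condition is then read off from Corollary~\ref{cor}.

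The main obstacle is the bookkeeping rather than any analysis: making sure the intervals are consistent, and intersecting them correctly with $(0,T)$ when $\bar{T}_i\ge T$ or $\bar{T}^*\notin(\bar{T}_2,T)$. To handle this I would first tabulate by the sign of $K_1-K_2$ and by whether $\bar{T}_1<\bar{T}_2$. Monotonicity of $\delta^*$ in $\bar{T}$ guarantees that each regime is a single interval. I would then truncate the intervals at $T$, which produces the entries of Table~\ref{table2} and the bifurcation sequences shown in the figures.
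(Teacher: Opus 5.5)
Your proposal is correct and is essentially the paper's proof. The paper likewise converts $\delta_i\gtrless\delta_i^*$ into $\bar T\gtrless\bar T_i$, writes $\delta^*=K_1(\bar T-\bar T_1)-K_2(\bar T-\bar T_2)$ with zero at $\bar T^*$, and reads each interval off Corollary~\ref{cor}; the only differences are that it splits first on the order of $\bar T_1$ and $\bar T_2$ (this, not the sign of $K_1-K_2$, is what separates Figures~\ref{bifurcation-1}--\ref{bifurcation-3}), and that it tacitly assumes $\bar T_i,\bar T^*<T$, whereas you explicitly truncate the intervals at $T$.
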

\begin{table}[htbp]
\centering
\caption{The bifurcations of the global dynamics for $\bar{T}\in (0,T)$}
\label{table2}
\begin{tblr}{
colspec={|m{1.85cm}|m{2.1cm}|m{1.8cm}|m{1.8cm}|m{1.8cm}|m{1.8cm}|m{1.8cm}|},
rows={valign=m,halign=c},
rowsep=6pt
}
\hline
\SetCell[r=2]{}{\small Relations of  $\bar{T}_2$ and  $\bar{T}_1$} & \SetCell[r=2]{}Parameter conditions & \SetCell[c=4]{}Regions of dynamic scenarios & & & & \SetCell[r=2]{}Bifurcation diagrams \\
\cline{3-6}
& &Dynamics-I &Dynamics-II &Dynamics-III &Dynamics-IV & \\
\hline
\SetCell[r=2]{}$\bar{T}_2=\bar{T}_1$ &{\tiny $\frac{\beta_1-\mu_1}{\xi_1} > \frac{\beta_2-\mu_2}{\xi_2}$} &$(\bar{T}_1,T)$ &{\scriptsize Not applicable} &{\scriptsize Not applicable} & $(0,\bar{T}_1]$  &Fig. 2(a) \\
\cline{2-7}
&{\tiny $\frac{\beta_1-\mu_1}{\xi_1} \leq \frac{\beta_2-\mu_2}{\xi_2}$} &{\scriptsize Not applicable} &$(\bar{T}_1,T)$ &{\scriptsize Not applicable} &$(0,\bar{T}_1]$ &Fig. 2(b) \\
\hline
\SetCell[r=2]{}$\bar{T}_2<\bar{T}_1$ &{\tiny $\frac{\beta_1-\mu_1}{\xi_1} > \frac{\beta_2-\mu_2}{\xi_2}$} &$(\bar{T}^*,T)$ &$(\bar{T}_2,\bar{T}^*]$  &{\scriptsize Not applicable} &$(0,\bar{T}_2]$ &Fig. 3(a) \\
\cline{2-7}
&{\tiny $\frac{\beta_1-\mu_1}{\xi_1} \leq \frac{\beta_2-\mu_2}{\xi_2}$} &{\scriptsize Not applicable} &$(\bar{T}_2,T)$ &{\scriptsize Not applicable} &$(0,\bar{T}_2]$ &Fig. 3(b) \\
\hline
\SetCell[r=2]{}$\bar{T}_2>\bar{T}_1$ &{\tiny $\frac{\beta_1-\mu_1}{\xi_1} \geq \frac{\beta_2-\mu_2}{\xi_2}$} &$(\bar{T}_2,T)$ &{\scriptsize Not applicable}  & $(\bar{T}_1,\bar{T}_2]$&$(0,\bar{T}_1]$ &Fig. 4(a) \\
\cline{2-7}
&{\tiny $\frac{\beta_1-\mu_1}{\xi_1} < \frac{\beta_2-\mu_2}{\xi_2}$} &$(\bar{T}_2,\bar{T}^*)$ &$[\bar{T}^*,T)$ &$(\bar{T}_1,\bar{T}_2]$ &$(0,\bar{T}_1]$ &Fig. 4(b) \\
\hline
\end{tblr}
\end{table}

\begin{proof}
To investigate the bifurcations of the global dynamics in this case,
we conduct a detailed analysis for three subcases according to the relationship between
$\bar{T}_1$ and $\bar{T}_2$.

\textbf{Subcase 1.} $\bar{T}_2=\bar{T}_1$
\begin{enumerate}[\rm (i)]
\item When $\bar{T}\in (0,\bar{T}_1]$, it is clear that $\bar{T}\leq \bar{T}_1$ and $\bar{T}\leq \bar{T}_2$, and then,
$\delta_1\leq \delta_1^*$ and $\delta_2\leq \delta_2^*.$ By Corollary \ref{cor}(IV), the dynamic scenario
for $\bar{T}\in (0,\bar{T}_1]$ is Dynamics-IV.
\item When $\bar{T}\in (\bar{T}_1,T)$, we have $\bar{T}> \bar{T}_1$ and $\bar{T}> \bar{T}_2$, and then,
$\delta_1>\delta_1^*$ and $\delta_2> \delta_2^*.$
Note that
$$\delta^*=\frac{1}{\xi_1}\ln\frac{\delta_1}{\delta_1^*}-\frac{1}{\xi_2}\ln\frac{\delta_2}{\delta_2^*}
=\bigg(\frac{\beta_2-\mu_2}{\xi_2}-\frac{\beta_1-\mu_1}{\xi_1}\bigg)(\bar{T}_1-\bar{T}).$$
Then, we have the following two conclusions.
\begin{itemize}
\item[(ii-1)] If {\footnotesize $\cfrac{\beta_1-\mu_1}{\xi_1} > \cfrac{\beta_2-\mu_2}{\xi_2}$}, then $\delta^*>0$.
By Corollary \ref{cor}(I), the dynamic scenario
for $\bar{T}\in (\bar{T}_1,T)$ is Dynamics-I (see Fig \ref{bifurcation-1}(a)).
\item[(ii-2)] If {\footnotesize $\cfrac{\beta_1-\mu_1}{\xi_1} \leq \cfrac{\beta_2-\mu_2}{\xi_2}$}, then $\delta^*\leq0$. By Corollary \ref{cor}(II),
 the dynamic scenario for $\bar{T}\in (\bar{T}_1,T)$ is Dynamics-II (see Fig \ref{bifurcation-1}(b)).

\end{itemize}
\par The corresponding bifurcation diagrams of this subcase are shown in Figure \ref{bifurcation-1}.
\end{enumerate}

\begin{figure}[htbp]
    \centering
    \begin{minipage}[b]{0.4\textwidth}
        \centering
        \includegraphics[width=\linewidth]{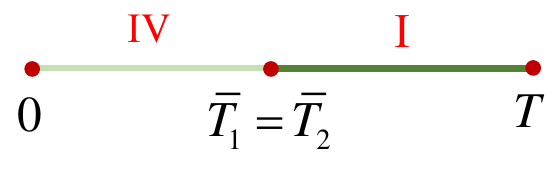}
        \subcaption{{\small $\frac{\beta_1-\mu_1}{\xi_1} > \frac{\beta_2-\mu_2}{\xi_2}$}}
    \end{minipage}
     \hfill
    \begin{minipage}[b]{0.4\textwidth}
        \centering
        \includegraphics[width=\linewidth]{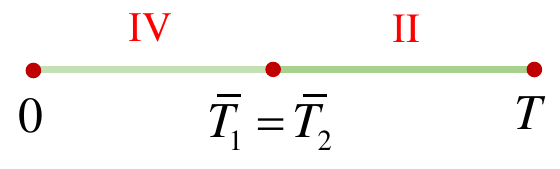}
        \subcaption{{\small $\frac{\beta_1-\mu_1}{\xi_1} \leq \frac{\beta_2-\mu_2}{\xi_2}$}}
    \end{minipage}
    \caption{$\bar{T}_2=\bar{T}_1$}
    \label{bifurcation-1}
\end{figure}

\textbf{Subcase 2.} $\bar{T}_2<\bar{T}_1$
\begin{enumerate}[\rm (i)]
\item When $\bar{T}\in (0,\bar{T}_2]$, we have $\bar{T}< \bar{T}_1$ and $\bar{T}\leq \bar{T}_2$, and then,
$\delta_1< \delta_1^*$ and $\delta_2\leq \delta_2^*.$ By Corollary \ref{cor}(IV), the dynamic scenario
for $\bar{T}\in (0,\bar{T}_2]$ is also Dynamics-IV.
\item When $\bar{T}\in (\bar{T}_2,\bar{T}_1]$, we have $\bar{T}\leq \bar{T}_1$ and $\bar{T}> \bar{T}_2$,
and then,
$\delta_1\leq \delta_1^*$ and $\delta_2> \delta_2^*.$ Meanwhile,
$$\delta^*=\frac{\beta_1-\mu_1}{\xi_1}(\bar{T}- \bar{T}_1)-\frac{\beta_2-\mu_2}{\xi_2}(\bar{T}- \bar{T}_2)<0.$$
By Corollary \ref{cor}(II), the dynamic scenario for $\bar{T}\in (\bar{T}_2,\bar{T}_1]$ is Dynamics-II.

\begin{figure}[htbp]
    \centering
    \begin{minipage}[b]{0.48\textwidth}
        \centering
        \includegraphics[width=\linewidth]{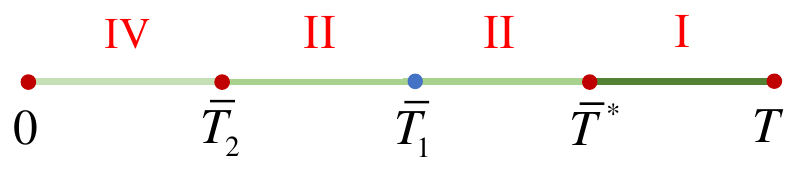}
        \subcaption{{\small $\frac{\beta_1-\mu_1}{\xi_1} > \frac{\beta_2-\mu_2}{\xi_2}$}}
    \end{minipage}
     \hfill
    \begin{minipage}[b]{0.4\textwidth}
        \centering
        \includegraphics[width=\linewidth]{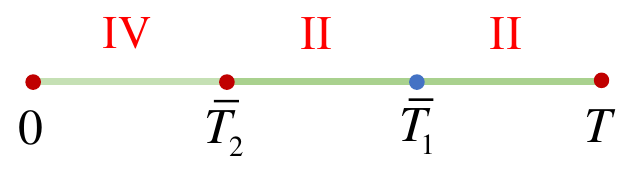}
        \subcaption{{\small $\frac{\beta_1-\mu_1}{\xi_1} \leq \frac{\beta_2-\mu_2}{\xi_2}$}}
    \end{minipage}
    \caption{$\bar{T}_2<\bar{T}_1$}
    \label{bifurcation-2}
\end{figure}

\item When $\bar{T}\in (\bar{T}_1,T)$, we have $\bar{T}> \bar{T}_1$ and $\bar{T}> \bar{T}_2$,
and then, $\delta_1> \delta_1^*$ and $\delta_2> \delta_2^*.$ Since
$$\delta^*=\frac{\beta_1-\mu_1}{\xi_1}(\bar{T}- \bar{T}_1)-\frac{\beta_2-\mu_2}{\xi_2}(\bar{T}- \bar{T}_2).$$
Then, we have the following two conclusions.
\begin{itemize}
\item[(iii-1)] If {\footnotesize $\frac{\beta_1-\mu_1}{\xi_1}\leq \frac{\beta_2-\mu_2}{\xi_2}$, then $\delta^*<0$}. By Corollary \ref{cor}(II),
the dynamic scenario for $\bar{T}\in (\bar{T}_1,T)$ is Dynamics-II (see Fig \ref{bifurcation-2}(b)).
\item[(iii-2)] If {\footnotesize $\frac{\beta_1-\mu_1}{\xi_1}> \frac{\beta_2-\mu_2}{\xi_2}$},
then $\delta^*$ can be either positive or negative. Thus, we denote
\begin{equation}
\label{T-value}
\bar{T}^*=\left(\frac{\beta_2-\mu_2}{\xi_2}\bar{T}_2-\frac{\beta_1-\mu_1}{\xi_1}\bar{T}_1\right)\big/\left(\frac{\beta_2-\mu_2}{\xi_2}-\frac{\beta_1-\mu_1}{\xi_1}\right).
\end{equation}
Recalling that $\bar{T}_1>\bar{T}_2$, it follows that $\bar{T}^*>\bar{T}_1$ and $\bar{T}^*>\bar{T}_2$. By simple calculations, we obtain that
$\delta^*\leq 0$ whenever $\bar{T} \in (\bar{T}_1,\bar{T}^*]$; while $\delta^*> 0$ whenever $\bar{T} \in (\bar{T}^*,T)$. Due to Corollary \ref{cor}(II) and (I), the dynamic scenario is Dynamics-II for $(\bar{T}_1,\bar{T}^*]$ and  is Dynamics-I
for $(\bar{T}^*,T)$ (see Fig \ref{bifurcation-2}(a)).

\end{itemize}
\par The corresponding bifurcation diagrams of this subcase are shown in Figure \ref{bifurcation-2}.
\end{enumerate}

\textbf{Subcase 3.} $\bar{T}_2>\bar{T}_1$
\begin{enumerate}[\rm (i)]
\item When $\bar{T}\in (0,\bar{T}_1]$, we have $\bar{T}\leq \bar{T}_1$ and $\bar{T}< \bar{T}_2$, and then,
$\delta_1\leq \delta_1^*$ and $\delta_2< \delta_2^*.$ By Corollary \ref{cor}(IV), the dynamic scenario
for $\bar{T}\in (0,\bar{T}_1]$ is also Dynamics-IV.
\item When $\bar{T}\in (\bar{T}_1,\bar{T}_2]$, we have $\bar{T}> \bar{T}_1$ and $\bar{T}\leq \bar{T}_2$,
and then,
$\delta_1> \delta_1^*$ and $\delta_2\leq \delta_2^*.$ By Corollary \ref{cor}(III), the dynamic scenario
for $\bar{T}\in (\bar{T}_1,\bar{T}_2]$ is Dynamics-III.

\begin{figure}[htbp]
    \centering
    \begin{minipage}[b]{0.4\textwidth}
        \centering
        \includegraphics[width=\linewidth]{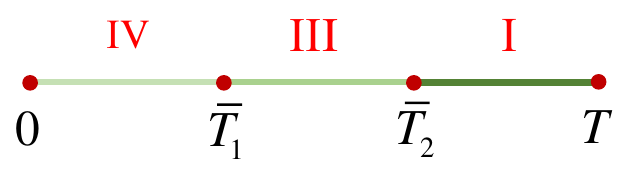}
        \subcaption{{\small $\frac{\beta_1-\mu_1}{\xi_1} \geq \frac{\beta_2-\mu_2}{\xi_2}$}}
    \end{minipage}
     \hfill
    \begin{minipage}[b]{0.5\textwidth}
        \centering
        \includegraphics[width=\linewidth]{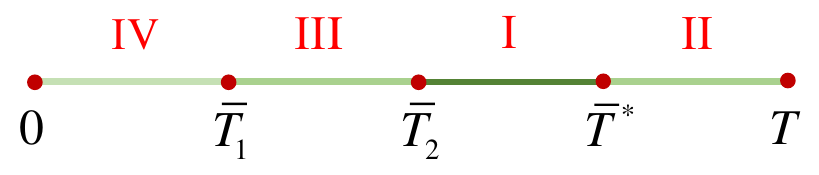}
        \subcaption{{\small $\frac{\beta_1-\mu_1}{\xi_1} < \frac{\beta_2-\mu_2}{\xi_2}$}}
    \end{minipage}
    \caption{$\bar{T}_2>\bar{T}_1$}
    \label{bifurcation-3}
\end{figure}
\item When $\bar{T}\in (\bar{T}_2,T)$, we have $\bar{T}> \bar{T}_1$ and $\bar{T}> \bar{T}_2$,
and then, $\delta_1> \delta_1^*$ and $\delta_2> \delta_2^*.$ Recalling that
$$\delta^*=\frac{\beta_1-\mu_1}{\xi_1}(\bar{T}- \bar{T}_1)-\frac{\beta_2-\mu_2}{\xi_2}(\bar{T}- \bar{T}_2)
,$$ we also have the following two conclusions.
\begin{itemize}
\item[(iii-1)] If {\footnotesize $\cfrac{\beta_1-\mu_1}{\xi_1}\geq \cfrac{\beta_2-\mu_2}{\xi_2}$, then $\delta^*>0$}. By Corollary \ref{cor}(I),
the dynamic scenario for $\bar{T}\in (\bar{T}_2,T)$ is Dynamics-I (see Fig \ref{bifurcation-3}(a)).
\item[(iii-2)] If {\footnotesize $\cfrac{\beta_1-\mu_1}{\xi_1}< \cfrac{\beta_2-\mu_2}{\xi_2}$},
then $\delta^*$ can be either positive or negative. Similar to Subcase 2(iii-2), we can define $\bar{T}^*$ (see the expression of (\ref{T-value})).
Since $\bar{T}_2>\bar{T}_1$, it follows that $\bar{T}^*>\bar{T}_1$ and $\bar{T}^*>\bar{T}_2$. A simple calculation yields that
$\delta^*>0$ whenever $\bar{T} \in (\bar{T}_2,\bar{T}^*)$; while $\delta^*\leq 0$ whenever $\bar{T} \in [\bar{T}^*,T)$. From Corollary \ref{cor}(I) and (II), the dynamic scenario is Dynamics-I
for $(\bar{T}_2,\bar{T}^*)$ and Dynamics-II
for $[\bar{T}^*,T)$ (see Fig \ref{bifurcation-3}(b)).
\end{itemize}
The corresponding bifurcation diagrams of this subcase are shown in Figure \ref{bifurcation-3}.
\end{enumerate}
Consequently, the proof of this proposition is now complete.
\end{proof}

\begin{remark}
{\rm For $\bar{T}\in (0,T)$, $\bar{T}^*$ does exist and $\bar{T}^*=\bar{T}_1=\bar{T}_2$ whenever $\bar{T}_2=\bar{T}_1$.
However, $\bar{T}^*$ does not exist for the two cases $\bar{T}_2<\bar{T}_1$, $\frac{\beta_1-\mu_1}{\xi_1}\leq \frac{\beta_2-\mu_2}{\xi_2}$
and $\bar{T}_2>\bar{T}_1$, $\frac{\beta_1-\mu_1}{\xi_1}\geq \frac{\beta_2-\mu_2}{\xi_2}$ (see Figures \ref{bifurcation-2}(b) and \ref{bifurcation-3}(a)).
Biologically, $\bar{T}_i$ can be interpreted as the critical minimal growing-season length for the persistence of the $i$-th species. When the favorable season length in every cycle exceeds this threshold, the mosquito species can proliferate in the growing season and persist across cycles via overwintering survival; otherwise, the population tends to go extinct.}
\end{remark}

\section{Conclusion}

In this paper, we investigate an {\it Aedes aegypti} and {\it Aedes albopictus} competition model with favorable-unfavorable
seasonal shifts. We first obtain the stability of the positive fixed point via the stability analysis of equilibria.
By using the theory of monotone systems, we obtain the bistable dynamics for two mosquito species. That is,
there exists a separatrix $\Gamma$ connecting $O$ and $E$ to the
infinity in $\mathbb{R}_+^2$ such that {\it Aedes albopictus} wins whenever the initial distribution
is above $\Gamma$, while {\it Aedes aegypti} wins whenever the initial distribution is
below $\Gamma$. Based on this, we give a complete classification of global dynamics of system (\ref{AA}).
In addition, we analyze the effect of the switching parameters on the competition outcomes.
More precisely, we provide the parameter regions of the global dynamics in the $\delta_1$-$\delta_2$ plane.
By taking $\bar{T}$ as a switching strategy parameter, the bifurcations of competitive outcomes are
studied in three subcases. The corresponding bifurcation diagrams are presented in Figures \ref{bifurcation-1}-\ref{bifurcation-3}.
These results state that this model does not exhibit the dynamic result of competitive coexistence,
instead, it either exhibits global extinction, competitive exclusion, or bistable dynamics under the influence of seasonal shifts.
Meanwhile, our conclusions also well explain the results of numerical analysis results in Zheng et al. \cite{Zheng2025}.


\begin{thebibliography}{99}


\bibitem{Bhatt2013}
S. Bhatt, P. W. Gething, O. J. Brady, J. P. Messina, A. W. Farlow, C. L. Moyes, J. M. Drake, J. S. Brownstein, A. G. Hoen, O. Sankoh, M. F. Myers, D. B. George, T. Jaenisch, G. R. W. Wint, C. P. Simmons, T. W. Scott, J. J. Farrar  and S. I. Hay, The Global Distribution and Burden of Dengue, {\em Nature}, \textbf{496}(2013), 504–507.

\bibitem{Barbara}
B. Byttebier, V. Loetti, M. S. D. Majo and S. Fischer, Temporal dynamics of the egg bank of {\it Aedes aegypti} (Diptera: Culicidae) in
the winter-spring transition in a temperate region, {\em Acta Trop.}, \textbf{255}(2024), 107227.

\bibitem{Denlinger}
D. L. Denlinger and P. A. Armbruster, Mosquito diapause, {\em Annu. Rev. Entomol.}, \textbf{59}(2014), 73–93.

\bibitem{Diagne}
C. Diagne, B. Leroy, A. C. Vaissière, R. E. Gozlan, D. Roiz, I. Jarić, J. M. Salles, C. J. A. Bradshaw and F. Courchamp, High and rising economic costs of biological invasions worldwide, {\em Nature}, \textbf{592}(2021), 571–576.

\bibitem{Sylvia}
S. Fischer, M. S. D. Majo, C. M. D. Battista, P. Montini, V. Loetti and R. E. Campos, Adaptation to temperate climates: Evidence of photoperiod-induced embryonic dormancy in {\it Aedes aegypti} in South America, {\em J. Insect Physiol.}, \textbf{117}(2019), 103887.

\bibitem{Girard2020}
M. Girard, C. B. Nelson, V. Picot and D. J. Gubler, Arboviruses: A Global Public Health Threat, {\em Vaccine}, \textbf{38}(2020), 3989–3994.

\bibitem{Gabriel2023}
Z. L. Gabriel, M. P. Alexander, F. A. O. Janeide, P. B. Brian, B. P. David and L. Yvonne-Marie, Global Distribution of {\it Aedes aegypti} and {\it Aedes albopictus} in a
Climate Change Scenario of Regional Rivalry, {\em Insects}, \textbf{14}(2023), 14010049.

\bibitem{SHP1996}
S. B. Hsu, H. L. Smith and P. Waltman, Competitive exclusion and coexistence for competitive systems on ordered Banach spaces, {\em Trans. Amer. Math. Soc.} 348
(1996), 4083-4094.

\bibitem{SX2012}
S. B. Hsu and X. Q. Zhao, A Lotka-Volterra competition model with seasonal succession, {\em J. Math. Biol.,} \textbf {64} (2012), 109-130.


\bibitem{Iwamura}
T. Iwamura, A. Guzman-Holst and K. A. Murray,  Accelerating Invasion Potential of Disease Vector {\it Aedes aegypti} under Climate
Change,  {\em Nat. Commun.}, \textbf{11}(2020), 2130.

\bibitem{Koch1974}
A. L. Koch, Coexistence Resulting from an Alternation of
Density Dependent and Density Independent Growth, {\em J. Theor. Biol.} \textbf{44}(1974), 373-386.


\bibitem{K2010}
C. A. Klausmeier, Successional state dynamics: A novel approach to modeling nonequilibrium foodweb dynamics, {\em J.
Theor. Biol.,} \textbf {262}(2010), 584-595.


\bibitem{Khormi}
H. M. Khormi and L. Kumar, Climate Change and the Potential Global Distribution of {\it Aedes aegypti}: Spatial Modelling Using GIS and
CLIMEX, {\em Geospat. Health}, \textbf{8}(2014), 405–415.

\bibitem{lwande2020}
O. W. Lwande, V. Obanda, A. Lindström, C. Ahlm, M. Evander, J. Näslund, and G. Bucht, Globe-Trotting {\it Aedes aegypti} and {\it Aedes albopictus}: Risk Factors for Arbovirus Pandemics, {\em Vector Borne Zoonotic Dis.}, \textbf{20}(2020), 71–81.


\bibitem{Mayer}
S. V. Mayer , R. B. Tesh and N. Vasilakis, The emergence of arthropod-borne viral diseases: A global prospective
on dengue, chikungunya and zika fevers, {\em Acta Trop.}, \textbf{166}(2017), 155–63.

\bibitem{Nasci1989}
R. Nasci, S. Hare and F. Willis, Interspecific mating between Louisiana strains of {\it Aedes
albopictus} and {\it Aedes aegypti} in the field and laboratory, {\em J. Am. Mosq. Control Assoc.}, \textbf{5}(1989), 416-421.

\bibitem{NWX2021}
L. Niu, Y. Wang and X. Xie, Carrying simplex in the Lotka-Volterra competition model with seasonal succession with applications, {\em Discrete Contin. Dyn. Syst. -B}, \textbf{26(4)}(2021), 2161–2172.

\bibitem{NWX2025}
L. Niu, Y. Wang and X. Xie, The dynamics of Lotka-Volterra competition models with seasonal succession:
I. Classification of 3-dimensional models, {\em SIAM J. Appl. Math.}, \textbf{85}(2025), 499-523.

\bibitem{NWX2026}
\ L. Niu, Y. Wang and X. Xie, Global dynamics of three-dimensional Lotka-Volterra competition models with seasonal succession: II. Uniqueness
of positive fixed points, {\em SIAM J. Appl. Math.,} {\textbf {86}}(2026), 2126-2147.


\bibitem{Naslund2021}
J. Näslund, C. Ahlm, K. Islam,  M. Evander, G. Bucht and O. W. Lwande,  Emerging Mosquito-Borne Viruses Linked to {\it Aedes aegypti}
and {\it Aedes albopictus}: Global Status and Preventive Strategies, {\em Vector Borne Zoonotic Dis.}, \textbf{21}(2021), 731–746.

\bibitem{Parker2019}
C. Parker, D. Ramirez and C. R. Connelly, State-wide survey of {\it Aedes aegypti} and {\it Aedes albopictus} (diptera: culicidae)
in Florida, {\em J. Vector Biol.}, \textbf{44}(2019), 210-215.

\bibitem{Reynolds1980}
C. S. Reynolds, Phytoplankton assemblages and their periodicity in stratifying
lake systems, {\em Holarct. Ecol.}, \textbf{3}(1980), 141-159.



\bibitem{SUZLD1986}
U. Sommer, Z. M. Gliwicz, W. Lampert and A. Duncan, The PEG-model of
seasonal succession of planktonic events in fresh waters,
 {\em Archiv f$\ddot{u}$r Hydrobiologie} \textbf{106}(1986), 433-471.


\bibitem{Smith1986}
H. L. Smith, Periodic solutions of periodic competitive and cooperative systems, {\em SIAM J. Math. Anal.}, \textbf(17)(1986), 1289-1318.



\bibitem{HP1995}
H. L. Smith and P. Waltman, The Theory of the Chemostat, {\rm Cambridge University Press} (1995).

\bibitem{HL1995}
H. L. Smith, Monotone Dynamical Systems. An Introduction to the Theory of Competitive
and Cooperative Systems, {\rm \ Math.\ Surveys Monogr.\ 41,\ AMS,\ Providence, RI} (1995).

\bibitem{Smith2001}
H. L. Smith and H. R. Thieme, Stable Coexistence and Bi-stability for Competitive Systems on Ordered Banach Spaces,
{\em J. Differential Equations}, {\textbf{176}}(2001), 195-222.

\bibitem{T1992}
P. Tak\'{a}\v{c}, Domains of attraction of generic $\omega$-limit sets for
strongly monotone discrete-time semigroups, {\em J. Reine Angew. Math.} 423 (1992), 101-173.

 \bibitem{Tang2025}
B. Tang, Y. Xiao and J. Wu, Dynamic behaviors of a periodic system with threshold policy-guided periodic and intermittent therapy of tumor, {\em SIAM J. Appl. Math.,} {\textbf {85}}(2025), 366-392.

\bibitem{Ruan2026}
C. Tian, L. Zou and S. Ruan, A two-species competition model on a compact metric graph for the invasion and competition of \textit{Aedes Aegypti} and \textit{Aedes Albopictus}
mosquitoes in Florida, {\em J. Math. Biol.,} \textbf{93}(2026), 8.

\bibitem{Wilkerson}
R. C. Wilkerson, Y. -M. Linton and D. Strickman, Mosquitoes of the World, Johns Hopkins University Press: Baltimore, MD, USA, 2020.

\bibitem {White2020}
E. R. White, A. Hastings, Seasonality in ecology: Progress and prospects in theory,
{\em Ecological Complexity}, \textbf{44}(2020), 100867. 

\bibitem{Xiao2026} J. Xiao, J. Yu and B. Zheng, Dynamics of a switched mosquito population model with periodic supplementary releases, {\em Chaos Solitons Fract.},  \textbf{212}(2026), 119028. 

\bibitem{Yang}
B. Yang, B. A. Borgert, B. W. Alto, C. K. Boohene, J. Brew, K. Deutsch, J. T. DeValerio, et al., Modelling distributions of {\it Aedes aegypti} and {\it Aedes albopictus} using climate, host density and interspecies competition, {\em PLoS Negl. Trop. Dis.}, \textbf{15}(2020), e0009063.

\bibitem{Zheng2025}
 B. Zheng, L. Chang, H. Guo and J. Yu, Global competitive dynamics of {\it Aedes aegypti} and {\it Aedes albopictus}
 with favorable-unfavorable seasonal shifts, {\em SIAM J. Appl. Math.}, \textbf{85}(2025), 1189-1211.






























\end{thebibliography}
\end{document}